\newtheorem{theorem}{Theorem}[section]
\newtheorem{lemma}[theorem]{Lemma}
\newtheorem{proposition}[theorem]{Proposition}
\newtheorem{corollary}[theorem]{Corollary}
\theoremstyle{definition}
\newtheorem{definition}[theorem]{Definition}
\theoremstyle{remark}
\newtheorem{remark}[theorem]{Remark}
\numberwithin{equation}{section}
\DeclareMathOperator*{\fil}{D}
\newcommand{\N}{\mathbb N}
\newcommand{\Z}{\mathbb Z}
\newcommand{\R}{\mathbb R}
\newcommand{\T}{\mathfrak{T}}
\newcommand{\e}{\varepsilon}
\let\phi=\varphi
\DeclareMathOperator*{\conv}{conv}
\DeclareMathOperator*{\diam}{diam}
\DeclareMathOperator*{\dist}{dist}
\begin{document}

\setcounter{page}{1}

\title{Homologically maximizing geodesics in conformally flat tori}

\author[Suhr]{Stefan Suhr}
\address{Mathematisches Institut, Universit\"at Freiburg}
\email{suhr100@googlemail.com}

\date{\today}

\begin{abstract}
We study homologically maximizing timelike geodesics in conformally flat tori. A causal geodesic 
$\gamma$ in such a torus is said to be homologically maximizing if one (hence every) lift of $\gamma$ to the 
universal cover is arclength maximizing. First we prove a compactness result for homologically maximizing timelike 
geodesics. This yields the Lip\-schitz continuity of the time separation of the universal cover on strict sub-cones 
of the cone of future pointing vectors. Then we introduce the stable time separation $\mathfrak{l}$. As an 
application we prove relations between the concavity properties of $\mathfrak{l}$ and the qualitative behavior of 
homologically maximizing geodesics.
\end{abstract}
\maketitle

\section{Introduction}

Here, we present a version of Mather theory for maximizing geodesics on conformally flat Lorentzian tori. More 
general Lorentzian manifolds will be treated in \cite{diss}. The source for the techniques we employ are \cite{bu} 
and \cite{ba}.

Consider a real vector space $V$ of dimension $m<\infty$ and $\langle .,.\rangle_1$ a nondegenerate symmetric 
bilinear form on $V$ with signature $(-,+,\ldots ,+)$. Set $|.|_1:=\sqrt{|\langle .,.\rangle_1|}$. Further let 
$\Gamma\subseteq V$ be a co-compact lattice and $f\colon V \to (0,\infty)$ a smooth $\Gamma$-invariant function. 
The Lorentzian metric $\overline{g}:= f^2\langle .,.\rangle_1$ then descends to a Lorentzian metric on the torus 
$V/\Gamma$. Denote the induced Lorentzian metric by $g$. Choose a time-orientation of $(V,\langle .,.\rangle_1)$. 
This time-orientation induces a time-orientation on  $(V/\Gamma,g)$ as well. Note that $(V/\Gamma,g)$ is vicious 
(\cite{bee} p. 137) and the universal cover $(V,\overline{g})$ is globally hyperbolic (\cite{bee} p. 65). According 
to \cite{rosa1} proposition 2.1, $(V/\Gamma,g)$ is geodesically complete in all three causal senses. Fix a norm 
$\|.\|$ on $V$ and denote the dual norm by $\|.\|^\ast$. We define $B_r(x):=\{y\in V|\;\|y-x\|<r\}$. Note that 
$\|.\|$ induces a metric on $V/\Gamma$. For a subset 
$A\subseteq V$ we write $\dist(x,A)$ to denote the distance of the point $x\in V$ to $A$ relative to $\|.\|$. 
Further denote by $\T$ the positive oriented causal vectors of $(V,\langle .,.\rangle_1)$, i.e. the vectors $v\in 
V\setminus \{0\}$ with $\langle v,v\rangle_1\le 0$ and positive time-oriented. For $\e>0$ set 
$\T_\e:= \{v\in \T|\, \dist(v,\partial \T)\ge \e\|v\|\}$.

Let $I$ be any (bounded or unbounded) interval in the reals.
A causal geodesics $\gamma \colon I\to V/\Gamma$ of $(V/\Gamma, g)$ is said to be homologically maximizing if one 
(hence every) lift $\overline{\gamma} \colon I\to V$ is arclength maximizing in $(V,\overline{g})$ (for simplicity 
we will only consider future pointing curves) in the following sense: For every compact subinterval $[a,b]\subseteq 
I$ the curve $\gamma|_{[a,b]}$ is arclength maximizing among all causal curves connecting $\gamma(a)$ to 
$\gamma(b)$. In section \ref{S2} we will prove a compactness
result for homologically maximizing timelike geodesics. Using this compactness result we will then deduce
the Lipschitz continuity of the time separation of $(V,\overline{g})$ on $\{(x,y)\in V\times V|\, y-x \in\T_\e\}$ 
for every $\e>0$. Here we use the term time separation as a synonym for the Lorentzian distance function (\cite{bee} 
definition 4.1).

In section \ref{S4} we show the existence of the stable version of the time separation $d$ of $(V,\overline{g})$, 
i.e. $\mathfrak{l}(v)=\lim_{n\to \infty}\frac{d(x,x+nv)}{n}$ exists for all $x\in V$ and $v\in \T$ and is independent 
of $x$. We will call $\mathfrak{l}$ the {\it stable time separation} of $(V/\Gamma,g)$. 
Futhermore for any $\e>0$ there exists a constant $K(\e)<\infty$ such that $|d(x,x+v)-\mathfrak{l}(v)|\le K(\e)$ 
for all $x\in V$ and $v\in \T_\e$. The stable time separation constitutes the Lorentzian version of the stable norm 
on $H_1(M,\R)$ of a compact Riemannian manifold $(M,g_R)$. The strategy of deduction we follow is taken from 
\cite{bu}. Even in the Riemannian case, the mentioned estimate on $|d(x,x+v)-\mathfrak{l}(v)|$ is not obvious.

In section \ref{S5} we relate concavity properties of $\mathfrak{l}$ to the existence and the asymp\-totic properties 
of homologically maximizing geodesics. More precisely, we show that for any homologically maximizing geodesic 
$\gamma\colon \R\to V/\Gamma$ there 
exists a support function $\alpha$ of $\mathfrak{l}$ such that all accumulation points of sequences of rotation 
vectors of subarcs of $\gamma$ lie in the intersection $\alpha^{-1}(1)\cap \mathfrak{l}^{-1}(1)$. Conversely, for any 
support function $\alpha$ of $\mathfrak{l}$ we can find a homologically maximizing timelike geodesic such that the 
limits of rotation vectors of $\gamma$ lie in $\alpha^{-1}(1)\cap \mathfrak{l}^{-1}(1)$. As a corollary we obtain 
the existence of infinitely many geometrically distinct homologically maximizing timelike geodesics in $(V/\Gamma,g)$. 

{\it Acknowledgement:} I would like to thank Prof. V. Bangert for the excellent support in the preparation of my 
diploma thesis, out of which these notes have arisen.

\section{Compactness Theorems}\label{S2}

For a curve $\gamma\colon I\to V/\Gamma$ and $s,t\in I$ set $\gamma(t)-\gamma(s):=\overline{\gamma}(t)-
\overline{\gamma}(s)$, where $\overline{\gamma}\colon I\to V$ is any lift of $\gamma$. Obviously, this definition 
does not depend on the chosen lift $\overline{\gamma}$.

\begin{definition}\label{D1}
(i) Let $\e >0$ and $G<\infty$.
A causal curve $\gamma\colon I\to V/\Gamma$ is said to be $(G,\e)$-timelike if there exist $a,b\in I$ such that 
$\gamma(b)-\gamma(a)\in \T_\e$ and $\|\gamma(b)-\gamma(a)\|\ge G$.

(ii) Let $F<\infty$. A causal curve $\gamma\colon I\to V/\Gamma$ is said to be $F$-almost maximal if 
$$L^{g}(\gamma|_{[s,t]})\ge d(\overline{\gamma}(s),\overline{\gamma}(t))-F$$ 
for one (hence every) lift $\overline{\gamma}$ of $\gamma$ to $V$ and all $[s,t]\subseteq I$.
\end{definition}

\begin{proposition}\label{P1}
For every $\e>0$ and $F<\infty$ there exist constants $\delta>0$ and $0<K<\infty$ such that for all $G<\infty$, 
all $F$-almost maximal $(G,\e)$-timelike curves $\gamma\colon I \to V/\Gamma$ and all $s< t\in I$ with 
$\|\gamma(t)-\gamma(s)\|\ge K$ we have
$$\gamma(t)-\gamma(s)\in \T_\delta.$$
\end{proposition}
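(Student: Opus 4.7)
The plan is to prove Proposition~\ref{P1} by contradiction, exploiting the conformal structure together with the near-tightness of the reverse triangle inequality forced by $F$-almost maximality. As preparation I would record the elementary conformal bounds: since $f$ is smooth, positive and $\Gamma$-invariant, it descends to a strictly positive smooth function on the compact torus $V/\Gamma$, so $f_{\min} := \min f > 0$ and $f_{\max} := \max f < \infty$. Together with the reverse triangle inequality in Minkowski space (causal straight segments maximize Lorentzian length), this yields
\[
f_{\min}|v|_1 \;\le\; d(x, x+v) \;\le\; f_{\max}|v|_1
\]
for every $v\in \T$. In particular, if $w := \gamma(t) - \gamma(s)$ lies close to $\partial\T$ relative to $\|w\|$, then $|w|_1/\|w\| \ll 1$ and hence $L^{g}(\gamma|_{[s,t]}) \le f_{\max}|w|_1 = o(\|w\|)$ as $\|w\| \to \infty$.

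Suppose, for contradiction, that there exist sequences $\gamma_n$ of $F$-almost maximal $(G_n,\e)$-timelike curves and $s_n < t_n$ with $\|w_n\|\to\infty$ and $w_n\notin\T_{1/n}$, where $w_n := \gamma_n(t_n) - \gamma_n(s_n)$. Let $[a_n,b_n]$ be a corresponding $(G_n,\e)$-timelike chord, so that $w'_n := \gamma_n(b_n) - \gamma_n(a_n) \in \T_\e$. Set $\alpha_n := \min(s_n,a_n)$, $\beta_n := \max(t_n,b_n)$, and let $\tau_0 \le \tau_1 \le \tau_2 \le \tau_3$ be the points $\{s_n,a_n,b_n,t_n\}$ in order inside $[\alpha_n,\beta_n]$. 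Applying $F$-almost maximality on each sub-interval $[\tau_i,\tau_{i+1}]$, subadditivity of $L^{g}$ and the Lorentzian reverse triangle inequality for $d$, one obtains the almost-tight two-sided estimate
\[
0 \;\le\; d(\overline\gamma_n(\alpha_n),\overline\gamma_n(\beta_n)) - \sum_{i=0}^{2} d(\overline\gamma_n(\tau_i),\overline\gamma_n(\tau_{i+1})) \;\le\; C F,
\]
where $C$ is a small universal constant. Translated into Minkowski geometry via reverse Cauchy--Schwarz, the defect $|v_1+v_2|_1 - |v_1|_1 - |v_2|_1$ quantifies the angular deviation of two future causal vectors $v_1,v_2$, so a small cumulative defect forces pairwise near-parallelism of the constituent displacements. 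Because one of them is the strictly timelike $w'_n\in\T_\e$, this should force the direction of $W_n := \gamma_n(\beta_n) - \gamma_n(\alpha_n)$, and consequently of $w_n$, into a fixed sub-cone $\T_{\delta}$ with $\delta$ depending only on $\e$, $F$, $f_{\min}$ and $f_{\max}$, contradicting $w_n\notin\T_{1/n}$ for large $n$.

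The main obstacle is the regime where $\|w'_n\|$ is much smaller than $\|w_n\|$: the timelike chord is then ``microscopic'' relative to the near-null chord, and a naive defect estimate degrades as $F/|w'_n|_1\to\infty$. I expect the resolution to use the $\Gamma$-periodicity of $f$ together with the cocompactness of $\Gamma$: translates of the timelike chord by elements of $\Gamma$ populate the universal cover with many timelike chords, so that the iterated near-tight reverse triangle inequality constrains the asymptotic direction of $\gamma_n$ independently of the original placement of $[a_n,b_n]$. One must also separate the cases $[a_n,b_n]\subseteq[s_n,t_n]$ and $[a_n,b_n]\cap [s_n,t_n]=\emptyset$; in the latter, a causal connecting sub-chord appears in the decomposition and is treated by the same near-tightness mechanism.
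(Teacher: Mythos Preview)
Your opening observations and the defect bound for $d$ are correct: $F$-almost maximality indeed gives
\[
0 \;\le\; d(\overline\gamma_n(\alpha_n),\overline\gamma_n(\beta_n)) - \sum_{i} d(\overline\gamma_n(\tau_i),\overline\gamma_n(\tau_{i+1})) \;\le\; F.
\]
The gap is in the next step. You write that, ``translated into Minkowski geometry via reverse Cauchy--Schwarz,'' a small cumulative defect forces pairwise near-parallelism of the constituent displacements. But the defect you control is in $d$, not in $|\cdot|_1$, and the two are related only through the sandwich $f_{\min}|v|_1\le d(x,x+v)\le f_{\max}|v|_1$. Feeding this in yields merely
\[
\sum_i |v_i|_1 \;\ge\; \frac{f_{\min}}{f_{\max}}\,|W_n|_1 \;-\; \frac{F}{f_{\max}},
\]
a \emph{ratio} bound, not a defect bound; the $|\cdot|_1$-defect $|W_n|_1-\sum_i|v_i|_1$ may be of order $(1-f_{\min}/f_{\max})|W_n|_1$, which blows up with $\|W_n\|$. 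From such a ratio bound one can conclude (this is the content of the paper's Lemma~\ref{L1}) only that \emph{some} constituent lies in a fixed $\T_{\e_0}$, not that all of them---and in particular not that $w_n$---do. So the step ``this should force the direction of $W_n$, and consequently of $w_n$, into a fixed sub-cone $\T_\delta$'' does not follow.

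The second difficulty you flag---the regime where the timelike chord $w'_n$ is microscopic relative to $w_n$---is in fact the heart of the matter, and your proposed resolution is too vague to work. You cannot ``populate the universal cover with translates of the timelike chord'': the chord lives on the specific curve $\gamma_n$, and $\Gamma$-translates of it do not lie on $\gamma_n$. What the paper does instead is a concrete cut-and-paste: it first locates a \emph{bounded}-length subarc whose displacement is uniformly timelike (via Lemma~\ref{L1}), then uses a quantitative reverse triangle inequality (Lemma~\ref{L2}) and a limit-curve argument (Lemma~\ref{L5}) to manufacture a nearby subarc with $\dist(\cdot,\partial\T)\ge 3\fil$, and finally rearranges pieces of $\gamma$ itself---using short $\Gamma$-controlled connecting arcs of $\|\cdot\|$-length at most $\fil$---to build a new $F_0$-almost maximal curve in which the target interval $[s,t]$ sits adjacent to a fixed-size uniformly timelike piece. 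Only then does Lemma~\ref{L2} force $\gamma(t)-\gamma(s)\in\T_\delta$. Your sketch contains the right first inequality but none of this propagation mechanism, which is where the actual work lies.
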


Before giving the proof of proposition \ref{P1} we review some applications.

Choose a orthonormal basis $\{e_1,\ldots ,e_m\}$ of $(V,\langle .,.\rangle_1)$. Note that the translations $x\mapsto 
x+v$ are conformal diffeomorphisms of $(V,\overline{g})$ for all $v\in V$. Then the $\overline{g}$-orthogonal frame 
field $x\mapsto (x,(e_1,\ldots ,e_m))$ on $V$ descends to a $g$-orthogonal frame field on $V/\Gamma$. 
In this way it makes sense to speak of a tangent vector $w\in T(V/\Gamma)$ as belonging to $\T$ or
$\T_\e$ for $\e>0$.
\begin{theorem}\label{T2}
For every $\e>0$ there exists a $\delta>0$ such that for all future pointing homologically maximizing geodesics 
$\gamma\colon I \to V/\Gamma$ with $\dot\gamma(t_0)\in \T_\e$ for some $t_0\in I$, we have
$$\dot\gamma(t)\in \T_\delta$$
for all $t\in I$.
\end{theorem}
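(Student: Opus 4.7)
The strategy is to deduce Theorem \ref{T2} from Proposition \ref{P1}, using that a homologically maximizing geodesic is automatically $0$-almost maximal in the sense of Definition \ref{D1}(ii).

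Fix $\e>0$ and set $\tilde\e:=\e/2$. Proposition \ref{P1} applied to $(\tilde\e,0)$ yields constants $\delta_0>0$ and $K<\infty$. The first task is to verify that $\gamma$ is $(G,\tilde\e)$-timelike for some $G$: from $\dot\gamma(t_0)\in \T_\e$ and $C^1$-continuity of $\gamma$, the velocity $\dot\gamma$ remains in $\T_{3\e/4}$ on a neighborhood $[t_0-\eta,t_0+\eta]\cap I$, and by convexity of the cone $\T_{3\e/4}$ the chord $\gamma(t_0+h)-\gamma(t_0)=\int_0^h\dot\gamma(t_0+s)\,ds$ lies in $\T_{\tilde\e}$ for every $h\in(0,\eta]$. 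As soon as $I$ admits an $h$ producing chord norm at least $K$, Proposition \ref{P1} applies and delivers
\[
\gamma(t)-\gamma(s)\in \T_{\delta_0}\quad\text{whenever }s<t\in I\text{ and }\|\gamma(t)-\gamma(s)\|\ge K.
\]

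It remains to transfer this chord-cone information to the tangent vectors: for each $t\in I$, I would pick $s<t<s'$ with $\|\gamma(s')-\gamma(s)\|\ge K$, so that $\gamma(s')-\gamma(s)\in \T_{\delta_0}$. Combining this with the constancy of $\overline{g}(\dot\gamma,\dot\gamma)$ along $\gamma$ and the uniform upper and lower bounds on the conformal factor $f$ on the compact base $V/\Gamma$ should then force $\dot\gamma(t)\in \T_\delta$ for some $\delta=\delta(\e)>0$.

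The main obstacle is precisely this last transfer, since a naive limiting argument ($h\to 0$ at fixed chord norm $K$) only succeeds when $\|\dot\gamma(t)\|$ happens to be large. The cleanest circumvention seems to be a proof by contradiction: a sequence of counterexamples $\gamma_n$, $\overline{g}$-arclength parametrized and translated by lattice elements so that $\gamma_n(0)$ sits in a fixed fundamental domain, would by the compactness theorem announced at the start of Section \ref{S2} subsequentially converge to a homologically maximizing geodesic $\gamma_\infty$ whose velocity is strictly timelike at one point but whose direction hits $\partial\T$ elsewhere --- impossible, since $\overline{g}(\dot\gamma_\infty,\dot\gamma_\infty)$ is a strictly negative constant along a timelike geodesic.
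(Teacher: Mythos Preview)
Your overall strategy---reduce to Proposition~\ref{P1} via the observation that a homologically maximizing geodesic is $0$-almost maximal and $(G,\tilde\e)$-timelike for a uniform $G$, then finish by a contradiction argument using the geodesic flow---is exactly what the paper's one-line proof has in mind. The gap lies in the execution of the contradiction.

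First, the $\overline g$-arclength parametrization is the wrong normalisation: if $\dot\gamma_n(t_n)$ approaches $\partial\T$ while $|\dot\gamma_n|_g\equiv 1$, then by~(\ref{E1}) one has $\|\dot\gamma_n(t_n)\|\to\infty$, so the initial data at the bad time escape every compact set and the continuity of the geodesic flow gives nothing. Use $\|.\|$-arclength instead, so that $\|\dot\gamma_n\|\equiv 1$ and the bad velocities subconverge to a unit \emph{lightlike} vector. Second, and more importantly, your assertion that $\gamma_\infty$ is ``strictly timelike at one point but whose direction hits $\partial\T$ elsewhere'' is not justified: after centring at the bad time, the good time $t_0^n$ may run off to $\pm\infty$, and then no timelike velocity of $\gamma_\infty$ is visible. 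This is precisely where Proposition~\ref{P1} is needed a second time. Centre at the bad time in $\|.\|$-arclength; the limit $\gamma_\infty$ is a lightlike geodesic, hence (since the metric is conformally flat) a reparametrized null straight line, so every chord of $\gamma_\infty$ lies on $\partial\T$. On the other hand, the domain of $\gamma_n$ contains both $0$ and the good time $s_0^n$; if $|s_0^n|$ stays bounded one gets your contradiction directly, while if $|s_0^n|\to\infty$ the domains of $\gamma_n$ around $0$ grow without bound, so for any fixed $T\ge K/\eta$ eventually $\|\gamma_n(T)-\gamma_n(0)\|\ge\eta T\ge K$ by~(\ref{E1c}), whence $\gamma_n(T)-\gamma_n(0)\in\T_{\delta_0}$ by Proposition~\ref{P1}. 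Passing to the limit forces $\gamma_\infty(T)-\gamma_\infty(0)\in\overline{\T_{\delta_0}}$, contradicting the previous sentence. This is what the paper means by ``continuity of the geodesic flow and the invariance of the set of lightlike vectors''.
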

Theorem \ref{T2} is a direct consequence of proposition \ref{P1} together with the continuity of the geodesic 
flow and the invariance of the set of lightlike vectors under the geodesic flow. 
This has the following immediate consequence. 
\begin{corollary}\label{C1}
Let $\e>0$ and $G<\infty$. Then any limit curve of a sequence of homologically maximizing $(G,\e)$-timelike curves 
in $(V/\Gamma,g)$ is timelike. 
\end{corollary}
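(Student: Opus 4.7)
The plan is to invoke Proposition \ref{P1} with $F = 0$, which applies because any homologically maximizing curve is trivially $0$-almost maximal. This yields constants $\delta > 0$ and $K < \infty$ depending only on $\e$ with the following property: for each $\gamma_n$ and every $s < t$ in its domain with $\|\gamma_n(t) - \gamma_n(s)\| \ge K$ one has $\gamma_n(t) - \gamma_n(s) \in \T_\delta$. Note that $\T_\delta$ is closed in $V \setminus \{0\}$ and, by its defining inequality $\dist(v, \partial \T) \ge \delta \|v\|$, every nonzero element of $\T_\delta$ is strictly timelike.

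Next I would pass to the limit. By geodesic completeness of $(V/\Gamma, g)$ and the standard limit-curve lemma for causal geodesics in the globally hyperbolic universal cover $(V, \overline{g})$, after extracting a subsequence and suitably reparametrizing, $\gamma$ is a causal geodesic defined on all of $\R$. The $(G, \e)$-timelike hypothesis gives $\|\gamma_n(b_n) - \gamma_n(a_n)\| \ge G > 0$, so $\gamma$ is nonconstant and therefore its displacement $\|\gamma(t) - \gamma(s)\|$ is unbounded as $|t - s| \to \infty$. Choose $s < t$ in the domain of $\gamma$ with $\|\gamma(t) - \gamma(s)\| > K$; for $n$ large one then has $\|\gamma_n(t) - \gamma_n(s)\| \ge K$, so Proposition \ref{P1} yields $\gamma_n(t) - \gamma_n(s) \in \T_\delta$. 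Closedness of $\T_\delta$ then delivers $\gamma(t) - \gamma(s) \in \T_\delta$, in particular strictly timelike.

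To finish, recall that a causal geodesic in $(V/\Gamma, g)$ has constant causal character along its length, so it suffices to rule out that $\gamma$ is lightlike. Since $\overline{g}$ is conformally flat, its null geodesics are reparametrizations of the straight-line null geodesics of $\langle \cdot, \cdot \rangle_1$; hence a lightlike $\gamma$ would have every displacement $\gamma(t) - \gamma(s)$ lying on $\partial \T$, contradicting the previous paragraph. Therefore $\gamma$ is timelike. The main subtlety I anticipate is the parametrization step in the limit-curve extraction, namely ensuring that along a suitable subsequence the $\gamma_n$ converge to a geodesic on an interval long enough to contain a subinterval of $\|\cdot\|$-displacement exceeding $K$; this is routine in the globally hyperbolic cover but needs a bit of care.
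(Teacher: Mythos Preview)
Your argument is correct and is essentially the route the paper has in mind: the paper states Corollary~\ref{C1} as an ``immediate consequence'' of Theorem~\ref{T2} (itself derived from Proposition~\ref{P1}) without giving details, and your direct appeal to Proposition~\ref{P1} with $F=0$, followed by passing displacements to the limit and invoking the fact that null geodesics of a conformally flat metric are null straight lines, is precisely a fleshed-out version of that implication. The only cosmetic point is that the paper phrases things in terms of tangent vectors (via Theorem~\ref{T2}) rather than displacements, but since Theorem~\ref{T2} is itself deduced from Proposition~\ref{P1}, this is not a genuine difference in approach.
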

Note that limit curves are understood in the sense of the limit curve lemma (\cite{bee} lemma 14.2).
The corollary resembles the {\it generalized timelike co-ray condition} in \cite{gaho}. It requires that any co-ray 
to a given timelike ray is again timelike. Following \cite{e1} it was proved in 
\cite{gaho} that the generalized timelike co-ray condition implies the Lipschitz continuity of the Busemann function
associated to a given timelike ray. The same proof (with some obvious modifications) works in the present situation 
as well.

\begin{theorem}\label{T2a}
For all $\e>0$ there exists an $L=L(\e)<\infty$ such that the time separation $d$ of $(V,\overline{g})$ is 
$L$-Lipschitz on $\{(x,y)\in V\times V|\, y-x\in \T_\e\}$.
\end{theorem}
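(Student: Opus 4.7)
The plan is to derive Theorem~\ref{T2a} by first establishing a uniform directional bound on the tangent vector of any maximizing geodesic whose chord lies in $\T_\e$, and then constructing an explicit causal curve between perturbed endpoints by cutting off such a geodesic near its two extremities. Fix pairs $(x,y),\,(x',y')$ with $y-x,\,y'-x'\in\T_\e$ and set $r=\|x-x'\|+\|y-y'\|$; the goal is $|d(x,y)-d(x',y')|\le L(\e)\,r$ with $L(\e)<\infty$ independent of the chosen pairs.

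\emph{Step 1 (velocity control).} I first show there is $\delta=\delta(\e)>0$ such that every future, $\overline g$-unit-speed, arclength maximizing geodesic $\sigma\colon[0,T]\to V$ with chord $\sigma(T)-\sigma(0)\in\T_\e$ satisfies $\dot\sigma(t)\in\T_\delta$ for all $t$. By Theorem~\ref{T2} it suffices to produce one $t_0$ with $\dot\sigma(t_0)\in\T_{\e'}$ for a fixed $\e'=\e'(\e)>0$. If this failed, one would obtain maximizing geodesics $\sigma_n\colon[0,T_n]\to V$ with chords in $\T_\e$ but tangents never in $\T_{1/n}$. Using $\Gamma$-invariance, translate so $\sigma_n(0)$ lies in a fixed fundamental domain. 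If $T_n\to\infty$, each $\sigma_n$ is $(G,\e)$-timelike for arbitrary $G$ eventually, so the limit curve lemma combined with Corollary~\ref{C1} yields a timelike limit geodesic whose tangent lies in the open cone $\T$, whereas the standing assumption forces the limit tangent into $\partial\T$. If $T_n$ is bounded, a subsequential limit $\sigma_\infty$ exists as a maximizing geodesic on a bounded interval with chord in the closure of $\T_\e$ (a subset of the open cone) but with everywhere null tangent; conformal invariance of null geodesics forces $\sigma_\infty$ to be a straight Minkowski null line, whose chord is null, again contradicting the timelike limit chord.

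\emph{Step 2 (endpoint surgery).} Let $\sigma$ be the maximizing geodesic from $x$ to $y$, so $T=d(x,y)$. By Step~1, $\dot\sigma(s)\in\T_\delta$ for all $s$, so by convexity of the cone $\T_\delta$ the integral $\sigma(a)-x=\int_0^a\dot\sigma(s)\,ds$ lies in $\T_\delta$ as well. The $\overline g$-unit-speed normalisation gives $|\dot\sigma|_1=1/f\ge 1/f_{\max}$, and the reverse Minkowski inequality yields $\|\sigma(a)-x\|\ge|\sigma(a)-x|_1\ge a/f_{\max}$, hence $\dist(\sigma(a)-x,\partial\T)\ge\delta\,a/f_{\max}$. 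Choosing
$$a=\frac{2f_{\max}\|x-x'\|}{\delta},\qquad T-b=\frac{2f_{\max}\|y-y'\|}{\delta},$$
one obtains $\sigma(a)-x',\ y'-\sigma(b)\in\T$. Concatenating the straight Minkowski segment from $x'$ to $\sigma(a)$, the subarc $\sigma|_{[a,b]}$, and the segment from $\sigma(b)$ to $y'$ produces a future-directed causal curve $\tilde\gamma$ from $x'$ to $y'$.

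\emph{Step 3 (length estimate and conclusion).} The two attached segments have non-negative $\overline g$-length, and $L^{\overline g}(\sigma|_{[a,b]})=b-a=T-2f_{\max}r/\delta$, so
$$d(x',y')\ge L^{\overline g}(\tilde\gamma)\ge d(x,y)-\frac{2f_{\max}}{\delta}\,r.$$
Performing the symmetric construction starting from a maximizing geodesic between $x'$ and $y'$ gives the reverse inequality, so $|d(x,y)-d(x',y')|\le(2f_{\max}/\delta)\,r$ whenever $r$ is small enough that $a<b$. Convexity of $\{y-x\in\T_\e\}\subset V\times V$ integrates this local bound along straight paths to the global $L(\e)$-Lipschitz continuity with $L(\e)=2f_{\max}/\delta(\e)$. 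The main obstacle is Step~1, where one must extract a uniformly timelike tangent direction solely from the chord hypothesis. This splits into the long-$T$ regime (handled by Corollary~\ref{C1}) and the short-$T$ regime (handled by the conformal rigidity of null geodesics); once the tangent-cone bound is established, the remaining construction is a routine Minkowski endpoint adjustment.
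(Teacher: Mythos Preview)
The paper does not give its own detailed proof of Theorem~\ref{T2a}; it merely remarks that Corollary~\ref{C1} plays the role of the generalized timelike co-ray condition and that the Eschenburg/Galloway--Horta argument for Lipschitz continuity of Busemann functions carries over ``with some obvious modifications''. Your Steps~2--3 constitute exactly such a surgery argument, so the route you take is the one the paper has in mind, written out explicitly.

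There is, however, a genuine gap in Step~1. Your dichotomy ``$T_n\to\infty$'' versus ``$T_n$ bounded'' does not handle the degenerate sub-case $T_n\to 0$ (equivalently $\|v_n\|\to 0$, since $\inf f\,\sqrt{c\e}\,\|v_n\|\le T_n\le\sup f\,\sqrt{C}\,\|v_n\|$). In that sub-case the limit object collapses to a point, so neither Corollary~\ref{C1} nor the null-line argument yields a contradiction; the statement ``chord in the closure of $\T_\e$ (a subset of the open cone)'' fails precisely because $0$ lies in that closure. The gap is easy to close with the paper's own Lemma~\ref{L1}: for any partition $\{v_i\}$ of the chord of a maximizing geodesic $\sigma$ one has $\sum|v_i|_1\ge L^{|.|_1}(\sigma)\ge d(x,y)/\sup f$ and $|\sum v_i|_1\le d(x,y)/\inf f$, so the hypothesis of Lemma~\ref{L1} holds with $\lambda_0=\inf f/\sup f$; this gives a chord-piece in $\T_{\e_0}$ for every partition, and refining the partition produces a parameter $t_\ast$ with $\dot\sigma(t_\ast)\in\T_{\e_0}$. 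This proves Step~1 directly, with no limiting argument at all. Alternatively, for short geodesics the exponential map shows the initial tangent direction is uniformly close to the chord direction $v_n/\|v_n\|\in\T_\e$.

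A minor point in Step~2: the inequality $\|\sigma(a)-x\|\ge|\sigma(a)-x|_1$ you invoke is valid only when $\|\cdot\|$ is the Euclidean norm attached to a $\langle.,.\rangle_1$-orthonormal basis, whereas the paper fixes an arbitrary norm. Use instead $|v|_1^2\le C\,\|v\|\dist(v,\partial\T)\le C\|v\|^2$ from~(\ref{E1}), which gives $\|v\|\ge|v|_1/\sqrt{C}$; this only changes the final constant $L(\e)$.
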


Now we proceed to the proof of proposition \ref{P1}.

First note the following fact. There exist constants $c>0$ and $C<\infty$ such that 
\begin{equation}\label{E1}
c\|w\|\dist(v,\partial\T)\le |\langle v,w\rangle_1|   \text{ and }|v|_1^2\le C \|v\|\dist(v,\partial\T)
\end{equation}
for all $v,w\in\T$. 

Since $\langle .,.\rangle_1$ is non-degenerate there exist constants $c'>0$ and $C'<\infty$ with 
$$c'\|v\|\le \|\langle v,.\rangle_1\|^\ast\le C'\|v\|$$
for all $v\in V$. For the first inequality in (\ref{E1}) note that for every 
$w\in\T$ the orthogonal complement of $w$ relative to $\langle .,.\rangle_1$, denoted by $w^{\perp}$, 
is a spacelike hyperplane. Since $\langle .,.\rangle_1$ is non-degenerate, we have $\dist(v,\partial \T)\le 
\dist(v,w^{\perp})$ for every $v\in \T$. Consider for $v\in V$ a $v_0\in w^{\perp}$ with $\|v-v_0\|=
\dist(v,w^{\perp})$. Note that we have 
$$|\langle v-v_0,w\rangle_1|=\|v-v_0\|\;\|\langle .,w\rangle_1\|^\ast.$$
Consequently we get
$$|\langle v,w\rangle_1| =|\langle v_0-v,w\rangle_1|\ge c'\|w\|\,\|v-v_0\|=c' \|w\| \dist(v,w^{\perp}).$$
Next consider a $v_1\in\partial\T$ with $\|v-v_1\|=\dist(v,\partial\T)$. Note that $\|v_1\|\le 2\|v\|$. Then we have
\begin{align*}
-\langle v,v\rangle_1&=-\langle v,v\rangle_1+\langle v_1,v_1\rangle_1=-2\int_0^1\langle (1-t)v_1+tv,v-v_1\rangle_1 
dt\\
&\le 2C'\sup_{t\in[0,1]}\|(1-t)v_1+tv\| \|v-v_1\|\le 4C'\|v\|\,\|v-v_1\|\\
&=:C\|v\|\dist(v,\partial\T).
\end{align*}

Since $\T$ contains no linear subspaces we can choose $\eta>0$ such that 
\begin{equation}\label{E1c}
\|\sum v_i\|\ge \eta\sum \|v_i\|
\end{equation} 
for any finite set $\{v_i\}_{1\le i\le N}\subset \T$. Note that this implies that we have
\begin{equation}\label{E1d}
L^{\|.\|}(\gamma)\ge \eta \|\gamma(b)-\gamma(a)\|
\end{equation} 
for any future pointing curve $\gamma\colon [a,b]\to V/\Gamma$.

\begin{lemma}\label{L1}
Let $\e,\lambda_0 >0$. Then there exists $\e_0>0$ such that for all $\{v_i\}_{1\le i\le N}\subseteq \T$ 
with $\sum v_i\in \T_\e$ and $\sum |v_i|_1 \ge \lambda_0 |\sum v_i|_1$ there exists $j\in\{1,\ldots ,N\}$ with 
$v_j\in \T_{\e_0}$.
\end{lemma}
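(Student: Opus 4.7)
My plan is a direct contradiction argument in which the two sides of the hypothesis $\sum|v_i|_1\ge \lambda_0|\sum v_i|_1$ are each compared to $\sum\|v_i\|$ using the structural inequalities already established in \eqref{E1} and \eqref{E1c}. The intuition is that if every $v_i$ hugs the light cone (i.e.\ lies outside $\T_{\e_0}$), then the total Lorentzian length $\sum|v_i|_1$ is forced to be much smaller than $\sum\|v_i\|$; on the other hand, since $\sum v_i$ is strictly timelike, $|\sum v_i|_1$ is comparable to $\|\sum v_i\|$, which in turn, because all $v_i$ are in the convex cone $\T$, is comparable to $\sum\|v_i\|$. Balancing these two comparisons pins down an admissible $\e_0$ in terms of $\e$, $\lambda_0$, and the universal constants $c,C,\eta$.

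Concretely, first I would apply the first inequality in \eqref{E1} with $v=w=\sum v_i\in\T_\e$ to get
\[
|{\textstyle\sum} v_i|_1^{\,2}\ge c\|{\textstyle\sum} v_i\|\,\dist({\textstyle\sum} v_i,\partial\T)\ge c\e\|{\textstyle\sum} v_i\|^2,
\]
and combine it with \eqref{E1c} to conclude $|\sum v_i|_1\ge \eta\sqrt{c\e}\sum\|v_i\|$. Next, assume for contradiction that $v_j\notin\T_{\e_0}$ for every $j$, i.e.\ $\dist(v_j,\partial\T)<\e_0\|v_j\|$. Then the second inequality in \eqref{E1} gives $|v_j|_1^{\,2}\le C\|v_j\|\dist(v_j,\partial\T)<C\e_0\|v_j\|^2$, so $|v_j|_1<\sqrt{C\e_0}\,\|v_j\|$. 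Summing and chaining with the hypothesis yields
\[
\sqrt{C\e_0}\sum\|v_j\|>\sum|v_j|_1\ge \lambda_0|{\textstyle\sum} v_j|_1\ge \lambda_0\eta\sqrt{c\e}\sum\|v_j\|,
\]
which (after dividing by the positive quantity $\sum\|v_j\|$) is a contradiction as soon as $\e_0\le \lambda_0^{\,2}\eta^{2}c\e/C$. Hence one can take, for example, $\e_0:=\lambda_0^{\,2}\eta^{2}c\e/(2C)$.

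I do not foresee a genuine obstacle: the lemma is a straightforward pigeonhole in the cone of causal vectors, and once the inequalities \eqref{E1} and \eqref{E1c} are in hand the argument is almost mechanical. The only mildly delicate point is keeping the direction of the chain of inequalities correct: the hypothesis provides a lower bound on $\sum|v_i|_1$, so the contradictory upper bound obtained from the assumption ``all $v_j$ close to $\partial\T$'' must beat it, which is exactly what forces $\e_0$ to be small. No limiting or compactness argument is needed, and the resulting $\e_0$ is explicit.
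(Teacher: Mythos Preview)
Your argument is correct and essentially identical to the paper's: both chain the inequalities \eqref{E1} and \eqref{E1c} to bound $\sum|v_i|_1$ above (using $v_i\notin\T_{\e_0}$) and $\lambda_0|\sum v_i|_1$ below (using $\sum v_i\in\T_\e$), obtaining a contradiction. The only cosmetic difference is that the paper phrases the contradiction via a sequence $\e_0=1/n\to 0$, whereas you extract the explicit threshold $\e_0=\lambda_0^2\eta^2c\e/(2C)$ directly.
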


\begin{proof}
Assume that for every $n\in\Z_{>0}$ there exist $\{v_i^n\}_{1\le i\le N(n)}\subseteq \T$ with $\sum v_i^n\in \T_\e$ 
and $v_i^n \notin \T_{\frac{1}{n}}$. With (\ref{E1}) and (\ref{E1c}) we have
\begin{align*}
0<\lambda_0\sqrt{c\e}\left\|\sum v_i\right\|&\le \lambda_0\left|\sum v_i^n\right|_1\\
&\le \sum |v_i^n|_1\le \sqrt{\frac{C}{n}}\sum \|v_i^n\|\le \frac{1}{\eta}\sqrt{\frac{C}{n}}\left\|\sum v_i\right\|.
\end{align*}
Consequently we get $0<\lambda_0\sqrt{c\e}\le \frac{1}{\eta}\sqrt{\frac{C}{n}}\to 0$ for $n\to \infty$.
\end{proof}

\begin{lemma}\label{L1a}
$v\in \T\mapsto \dist(v,\partial\T)$ is a concave function.
\end{lemma}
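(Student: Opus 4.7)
The lemma is a special case of a general fact: for any closed convex set $C$ in a normed space, the function $v \mapsto \dist(v,\partial C)$ is concave on $C$. I would apply this with $C = \overline{\T} = \T \cup \{0\}$. The two inputs are therefore (i) the convexity of $\overline{\T}$ and (ii) the standard ball-rescaling argument yielding concavity of the ``inradius''.

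For (i), $\T$ is a convex cone. Indeed, for any two future-pointing causal vectors $u, v$ the reverse Cauchy--Schwarz inequality in Lorentzian signature gives $\langle u, v \rangle_1 \le 0$, so
\[
\langle u+v,u+v \rangle_1 = \langle u,u \rangle_1 + 2\langle u,v \rangle_1 + \langle v,v \rangle_1 \le 0,
\]
and $u+v$ is again future-pointing (and non-zero, since $u$ and $v$ both lie in the open future half-space selected by the time orientation). Closure under positive scaling is obvious, so $\T$ is a convex cone and $\overline{\T} = \T \cup \{0\}$ is a closed convex set.

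For (ii), take $v_1,v_2 \in \T$ and set $r_i := \dist(v_i,\partial \T)$. Since $\overline{\T}$ is closed and convex, the standard equality $\dist(v,\partial \T) = \dist(v, V \setminus \overline{\T})$ holds for all $v \in \overline{\T}$ (any straight segment from $v$ to a point outside $\overline{\T}$ must cross $\partial \T$), so $B_{r_i}(v_i) \subseteq \overline{\T}$. Fix $t \in [0,1]$ and set $\rho := t r_1 + (1-t) r_2$. If $\rho = 0$ the desired inequality is vacuous, so assume $\rho > 0$. For any $w \in V$ with $\|w\| < \rho$, put $u_i := (r_i/\rho) w$; then $\|u_i\| < r_i$, so $v_i + u_i \in \overline{\T}$, and $t u_1 + (1-t) u_2 = w$. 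Convexity of $\overline{\T}$ then gives
\[
t v_1 + (1-t) v_2 + w = t(v_1 + u_1) + (1-t)(v_2 + u_2) \in \overline{\T},
\]
so $B_\rho(t v_1 + (1-t) v_2) \subseteq \overline{\T}$ and therefore $\dist(t v_1 + (1-t) v_2,\partial \T) \ge \rho = t r_1 + (1-t) r_2$. This is the claimed concavity. No step should present a real obstacle; the one piece worth writing out carefully is the reduction $\dist(\cdot,\partial \T) = \dist(\cdot, V \setminus \overline{\T})$, which is the only spot where the closed-convex-set structure of $\overline{\T}$ (rather than just its convexity) is needed.
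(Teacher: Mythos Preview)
Your proof is correct and in fact a bit more direct than the paper's. Both arguments rest on the same two ingredients---convexity of $\overline{\T}$ and the characterization of $\dist(v,\partial\T)$ as the radius of the largest ball around $v$ contained in $\overline{\T}$---but the paper only establishes that the superlevel sets $\{\dist(\,\cdot\,,\partial\T)\ge r\}$ are convex, i.e.\ quasi-concavity: it fixes a single radius $r\le\min(r_1,r_2)$, shows that the tube $\bigcup_{\lambda}B_r((1-\lambda)v+\lambda w)$ coincides with $\conv(B_r(v)\cup B_r(w))$ (invoking Carath\'eodory for one inclusion), and uses convexity of $\T$ to place this tube inside $\T$. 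Full concavity is then obtained only in combination with the positive homogeneity noted afterward. Your rescaling trick $u_i:=(r_i/\rho)\,w$ goes straight to the sharper ball $B_{\rho}$ with $\rho=tr_1+(1-t)r_2$, yielding the concavity inequality in one stroke, without Carath\'eodory and without any appeal to homogeneity; in particular your argument works verbatim for the distance to the boundary of an arbitrary closed convex set. One tiny cosmetic point: when $r_i=0$ you write ``$\|u_i\|<r_i$'', which is literally false, but the needed conclusion $v_i+u_i=v_i\in\overline{\T}$ is immediate anyway.
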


\begin{proof}
We prove that the superlevels $\dist^{-1}(.,\partial\T)([r,\infty))$ are convex for all $r\in\R$. 

Fix $r\in \R$ and let $v,w\in \dist^{-1}(.,\partial\T)([r,\infty))$. For $\lambda \in [0,1]$ set 
$A_\lambda:=B_r((1-\lambda)v+\lambda w)$. We claim that 
\begin{equation}\label{E1a}
\cup_{\lambda\in [0,1]}A_\lambda =\conv(A_0\cup A_1),
\end{equation}
where $\conv(A_0\cup A_1)$ denotes the convex hull of $A_0\cup A_1$.
Let $x\in \conv(A_0\cup A_1)$. With the theorem of Caratheodory we can choose $x_0,\ldots ,x_m\in A_0\cup A_1$ 
and $\lambda^0,\ldots ,\lambda^m\ge 0$ with $\sum \lambda^i=1$ and $x= \sum\lambda^i x_i$. By relabeling the
$x_i$ we can assume that $x_0,\ldots ,x_j\in A_0$ and $x_{j+1},\ldots ,x_m\in A_1$. Then we have 
$$x_v:=\sum_{k=0}^j \lambda^k (x_k-v), x_w:=\sum_{l=j+1}^m \lambda^l (x_l-w)\in B_r(0).$$
Set $\lambda :=\sum_{k=0}^j \lambda^k$. We get $x_v+x_w=x-\lambda v-(1-\lambda)w$. Since $\|x_v+x_w\|\le r$ we have
$x\in A_\lambda$. Therefore we get $\conv(A_0\cup A_1)\subseteq \cup A_\lambda$. 

For the other inclusion let $y\in \cup A_\lambda$. Choose $\lambda_y\in [0,1]$ with $y\in A_{\lambda_y}$. Define 
\begin{align*}
y_v:=y+\lambda_y (v-w) \text{ and } y_w:=y+(1-\lambda_y)(w-v).
\end{align*}
We have $y_v\in A_0$, $y_w\in A_1$ and $y\in \conv(\{y_v,y_w\})$. This shows (\ref{E1a}).

Since 
$$\cup A_\lambda = \conv(A_0\cup A_1)\subset \conv(\T\cup\T)=\T$$ 
we have $\dist(\lambda v+(1-\lambda)w,\partial \T)\ge r$ for all $\lambda \in [0,1]$. This implies the 
convexity of $\dist^{-1}(.,\partial\T)([r,\infty))$.
\end{proof}

Note that $v\in \T\mapsto \dist(v,\partial\T)$ is positively homogenous of degree one, i.e. we have 
$\dist(\lambda v,\partial \T)=\lambda \dist(v,\partial \T)$ for all $\lambda \ge 0$. 

Lemma \ref{L1a} and the positive homogeneity imply
\begin{equation}\label{E1b}
\dist(v+w,\partial \T)\ge \dist(v,\partial\T)+\dist(w,\partial\T).
\end{equation}

\begin{corollary}\label{C0}
The cones $\T_\e$ are convex for all $\e>0$. 
\end{corollary}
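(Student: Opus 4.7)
The plan is to deduce convexity of $\T_\e$ directly from the superadditivity estimate (\ref{E1b}) together with the triangle inequality for $\|.\|$ and the positive homogeneity of $\dist(.,\partial\T)$.

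First I would take $v,w\in\T_\e$ and $\lambda\in[0,1]$ and write $\lambda v+(1-\lambda)w$. Applying (\ref{E1b}) to the decomposition $\lambda v+(1-\lambda)w = \lambda v + (1-\lambda)w$, combined with positive homogeneity, yields
\begin{equation*}
\dist(\lambda v+(1-\lambda)w,\partial\T)\ge \lambda\dist(v,\partial\T)+(1-\lambda)\dist(w,\partial\T).
\end{equation*}
Using $v,w\in\T_\e$, the right-hand side is bounded below by $\e(\lambda\|v\|+(1-\lambda)\|w\|)$. The triangle inequality for the norm gives $\|\lambda v+(1-\lambda)w\|\le \lambda\|v\|+(1-\lambda)\|w\|$, so the right-hand side is at least $\e\|\lambda v+(1-\lambda)w\|$. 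Hence $\lambda v+(1-\lambda)w\in\T_\e$.

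There is no serious obstacle here: the content of the statement is already packaged into Lemma \ref{L1a} (equivalently inequality (\ref{E1b})). The only point requiring any care is that the definition of $\T_\e$ involves $\e\|v\|$ rather than a uniform constant, so one must combine the concavity of $\dist(.,\partial\T)$ with the convexity of $\|.\|$ in the opposite direction; this is what makes the final inequality go through.
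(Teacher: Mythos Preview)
Your proof is correct and follows essentially the same line as the paper: both use the superadditivity estimate (\ref{E1b}) together with the triangle inequality for $\|.\|$. The only cosmetic difference is that the paper verifies $v+w\in\T_\e$ (which, combined with $\T_\e$ being a cone, yields convexity), whereas you treat the convex combination $\lambda v+(1-\lambda)w$ directly via positive homogeneity.
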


\begin{proof}
For $v,w\in \T_\e$ we have
\begin{align*}
\dist(v+w,\partial\T)&\ge \dist(v,\partial\T)+\dist(w,\partial \T)\ge \e(\|v\|+\|w\|)\\
&\ge \e\|v+w\|.
\end{align*}
\end{proof}

\begin{lemma}\label{L2}
Let $\lambda_1 <\infty$ and set $\mu:=\frac{c\;\eta}{4\lambda_1^2 C}$. Then we have
$$|v+w|_1 \ge \lambda_1 (|v|_1+|w|_1)$$
for all $v,w\in \T$ with $\|v\|\le \mu\|w\|$ and $\dist(w,\partial\T)\le \mu\dist(v,\partial\T)$.
\end{lemma}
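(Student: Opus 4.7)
The plan is to combine the reversed-triangle-style expansion of $|v+w|_1^2$ that is available because $v$ and $w$ are both future causal with the two inequalities in (\ref{E1}). Since $v,w\in\T$ are future directed and causal, one has $\langle v,w\rangle_1\le 0$, hence
\begin{equation*}
|v+w|_1^2=-\langle v+w,v+w\rangle_1=|v|_1^2+2|\langle v,w\rangle_1|+|w|_1^2\ge 2|\langle v,w\rangle_1|.
\end{equation*}
The first inequality in (\ref{E1}) bounds $|\langle v,w\rangle_1|$ from below by $c\|w\|\dist(v,\partial\T)$, which will serve as the reference quantity throughout the argument.

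Next I would use the second inequality in (\ref{E1}) to bound both $|v|_1^2$ and $|w|_1^2$ from above, then invoke the two hypotheses $\|v\|\le\mu\|w\|$ and $\dist(w,\partial\T)\le\mu\dist(v,\partial\T)$ to re-express each bound in terms of the same reference quantity. Explicitly, $|v|_1^2\le C\|v\|\dist(v,\partial\T)\le C\mu\|w\|\dist(v,\partial\T)$ by the first hypothesis, while $|w|_1^2\le C\|w\|\dist(w,\partial\T)\le C\mu\|w\|\dist(v,\partial\T)$ by the second. Combining these with the lower bound on $|\langle v,w\rangle_1|$ yields $|v|_1^2+|w|_1^2\le\frac{2C\mu}{c}|\langle v,w\rangle_1|$, and hence $(|v|_1+|w|_1)^2\le 2(|v|_1^2+|w|_1^2)\le\frac{4C\mu}{c}|\langle v,w\rangle_1|$ by AM--GM.

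Plugging this back into $|v+w|_1^2\ge 2|\langle v,w\rangle_1|$ gives $|v+w|_1^2\ge\frac{c}{2C\mu}(|v|_1+|w|_1)^2$, and the prescribed value $\mu=\frac{c\eta}{4\lambda_1^2 C}$ turns the coefficient into $\frac{2\lambda_1^2}{\eta}$. Since (\ref{E1c}) applied to a single vector forces $\eta\le 1$, one has $\sqrt{2/\eta}\ge 1$, and taking square roots produces the claimed $|v+w|_1\ge\lambda_1(|v|_1+|w|_1)$.

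The proof is essentially careful bookkeeping; the only conceptual point is that causality forces the reversed triangle inequality, so the cross term $|\langle v,w\rangle_1|$ drives rather than obstructs the lower bound. The main thing to watch is that both hypotheses are genuinely used in tandem: the norm hypothesis is what bounds $|v|_1$ via a quantity involving $\|w\|$, while the distance-to-the-boundary hypothesis is what bounds $|w|_1$ via a quantity involving $\dist(v,\partial\T)$. The factor of $\eta$ in the stated $\mu$ is slack, so any $\mu\lesssim c/(C\lambda_1^2)$ would in fact suffice.
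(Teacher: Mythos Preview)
Your proof is correct, but it takes a genuinely different route from the paper's. The paper obtains the lower bound on $|v+w|_1$ by applying the first inequality in (\ref{E1}) with both arguments equal to $v+w$, giving $|v+w|_1\ge\sqrt{c\,\dist(v+w,\partial\T)\,\|v+w\|}$; it then invokes the superadditivity of $\dist(\,\cdot\,,\partial\T)$ from (\ref{E1b}) together with (\ref{E1c}) to reach $|v+w|_1\ge\sqrt{c\eta\,\dist(v,\partial\T)\,\|w\|}$, and from there the argument proceeds as yours does, splitting this reference quantity to control $|v|_1$ and $|w|_1$ separately via the second half of (\ref{E1}). You instead expand $|v+w|_1^2$ directly and use the cross term $2|\langle v,w\rangle_1|$, applying the first inequality in (\ref{E1}) in its genuinely off-diagonal form. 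This bypasses Lemma~\ref{L1a} and (\ref{E1b}) entirely, and (\ref{E1c}) enters only through the trivial observation $\eta\le 1$; as you note, the factor $\eta$ in the prescribed $\mu$ is then slack. Your argument is therefore slightly more elementary and reveals that the concavity of $\dist(\,\cdot\,,\partial\T)$ is not actually needed for this lemma, whereas the paper's version keeps the proof uniform with the surrounding machinery.
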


\begin{proof}
Using (\ref{E1}), (\ref{E1b}) and (\ref{E1c}) we get
\begin{align*}
|v+w|_1&\ge \sqrt{c\dist(v+w,\partial\T)\|v+w\|}\ge \sqrt{c\;\eta \dist(v,\partial\T)\|w\|}\\
&=\frac{\sqrt{c\;\eta}}{2}\left(\sqrt{\dist(v,\partial\T)\|w\|}+\sqrt{\dist(v,\partial\T)\|w\|}\right)\\
&\ge \lambda_1\sqrt{C}(\sqrt{c\dist(v,\partial\T)\|v\|}+\sqrt{c\dist(w,\partial\T)\|w\|})\ge \lambda_1 (|v|_1+|w|_1).
\end{align*}
\end{proof}

Note that the time separation $d$ of $(V,\overline{g})$ satisfies
$$\inf\, f\,|v|_1\le d(x,x+v)\le \sup \, f\,|v|_1$$
for all $v\in \T$ and $x\in V$.

\begin{lemma}\label{L5}
Let $\kappa',F'<\infty$ and $\e'>0$ be given. Then there exists $K':=K'(\kappa',F',\e')<\infty$ such that 
for all $G'<\infty$ and all future pointing $(G',\e')$-timelike $F'$-almost maximal curves 
$\gamma\colon [s,t]\to V/\Gamma$ with $\|\gamma(t)-\gamma(s)\|\ge K'$, we have 
$$\dist(\gamma(t)-\gamma(s),\partial \T)\ge \kappa'.$$
\end{lemma}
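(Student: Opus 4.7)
The plan is to argue by contradiction, combining the subadditivity (\ref{E1b}) of the cone distance with the bilinear estimates (\ref{E1}). Assume the conclusion fails: there exist sequences $G_n<\infty$ and $F'$-almost maximal, $(G_n,\e')$-timelike curves $\gamma_n\colon[s_n,t_n]\to V/\Gamma$ with $\|v_n\|\to\infty$ but $\dist(v_n,\partial\T)<\kappa'$, where $v_n:=\gamma_n(t_n)-\gamma_n(s_n)$. Pick $a_n<b_n$ realising the $(G_n,\e')$-timelike property, and decompose $v_n=v_n^{(1)}+v_n^{(2)}+v_n^{(3)}$ where $v_n^{(2)}:=\gamma_n(b_n)-\gamma_n(a_n)\in\T_{\e'}$ and $v_n^{(1)},v_n^{(3)}$ are the two remaining future causal pieces.

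Iterating (\ref{E1b}) gives $\e'\|v_n^{(2)}\|\le\dist(v_n^{(2)},\partial\T)\le\dist(v_n,\partial\T)<\kappa'$, so $\|v_n^{(2)}\|\le\kappa'/\e'$ stays bounded; after passing to a subsequence, $\|v_n^{(1)}\|+\|v_n^{(3)}\|\to\infty$. To lower bound $|v_n|_1^2=-\langle v_n,v_n\rangle_1$, I would expand and apply (\ref{E1}): $-\langle v_n^{(2)},v_n^{(j)}\rangle_1\ge c\e'\|v_n^{(2)}\|\,\|v_n^{(j)}\|$ for $j\in\{1,3\}$, since $\dist(v_n^{(2)},\partial\T)\ge\e'\|v_n^{(2)}\|$; the remaining cross term $-\langle v_n^{(1)},v_n^{(3)}\rangle_1$ is non-negative because both arguments are future causal. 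Together with $\|v_n^{(1)}\|+\|v_n^{(3)}\|\ge\|v_n\|-\|v_n^{(2)}\|$ this yields
\[
|v_n|_1^2\;\ge\;2c\e'\,\|v_n^{(2)}\|\bigl(\|v_n\|-\|v_n^{(2)}\|\bigr).
\]
Comparing with the upper bound $|v_n|_1^2\le C\kappa'\|v_n\|$ from (\ref{E1}) forces, for $\|v_n\|$ sufficiently large, the inequality $\|v_n^{(2)}\|\le C\kappa'/(2c\e')$. Choosing $(a_n,b_n)$ to maximise $\|v_n^{(2)}\|$ among all $\T_{\e'}$-witnesses then shows that the supremum $M_n$ over such subarcs is uniformly bounded.

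To close the argument I would note that the assumed bound $\dist(v_n,\partial\T)<\kappa'$ also controls the cumulative $\T_{\e'}$-timelike budget of $\gamma_n$: by (\ref{E1b}) applied to any refined decomposition of $v_n$ into disjoint $\T_{\e'}$-witnesses and the intervening pieces, the total norm of any family of pairwise disjoint $\T_{\e'}$-timelike subarcs is at most $\kappa'/\e'$. Combined with the uniform bound on $M_n$, this restricts both the sizes and the number of timelike subarcs of $\gamma_n$. The main obstacle is to convert these restrictions into a contradiction with $\|v_n\|\to\infty$; my plan is to use the cocompactness of $\Gamma$ and the limit curve lemma (\cite{bee} lemma 14.2) to extract a limit $F'$-almost maximal causal curve in $V/\Gamma$ whose asymptotic displacement direction is lightlike while its timelike excursions remain of bounded size, and to derive a contradiction from the impossibility of $F'$-almost maximality along subintervals of arbitrarily large norm in such a configuration. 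Arranging this compactness-plus-subadditivity step rigorously is the delicate point of the proof.
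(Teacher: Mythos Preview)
Your setup matches the paper's: argue by contradiction, observe via (\ref{E1b}) that $\|v_n^{(2)}\|\le\kappa'/\e'$, and pass to a limit curve. The bilinear expansion you carry out is correct but redundant---it rederives the bound on $\|v_n^{(2)}\|$ already obtained from (\ref{E1b})---and, more importantly, it does not supply the contradiction. The estimates $|v_n|_1\gtrsim\sqrt{\|v_n\|}$ (your lower bound, once $\|v_n^{(2)}\|$ is bounded below) and $|v_n^{(1)}|_1+|v_n^{(3)}|_1\lesssim\sqrt{\|v_n\|}$ (from (\ref{E1}) together with $\dist(v_n^{(j)},\partial\T)<\kappa'$) have the same order of growth, so the three-piece decomposition alone produces no asymptotic gap between the straight-segment length and the sum of the pieces; $F'$-almost maximality is not violated by this comparison.

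The mechanism you are missing, and which the paper uses, is Lemma~\ref{L2}. On the limit curve $\gamma_\infty$ (which is $F'$-almost maximal, satisfies $\dist(\gamma_\infty(T)-\gamma_\infty(0),\partial\T)\le\kappa'$ for all $T$, and carries an initial subarc $[0,T_0]$ with displacement in $\T_{\e'}$ of norm in a fixed compact interval), one chooses $T_1>T_0$ so that $\dist(\gamma_\infty(T)-\gamma_\infty(T_1),\partial\T)\le\mu\,\dist(\gamma_\infty(T_1)-\gamma_\infty(0),\partial\T)$ for all $T>T_1$; this is possible precisely because the total boundary-distance is capped at $\kappa'$ while the initial piece already contributes at least $\e'^2$. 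Then take $T_2>T_1$ with $\|\gamma_\infty(T_2)-\gamma_\infty(T_1)\|$ large. Setting $v=\gamma_\infty(T_1)-\gamma_\infty(0)$ and $w=\gamma_\infty(T_2)-\gamma_\infty(T_1)$, Lemma~\ref{L2} yields $|v+w|_1\ge\lambda_1(|v|_1+|w|_1)$ with $\lambda_1=2\sup f/\inf f$. Combined with $F'$-almost maximality, $L^g(\gamma_\infty|_{[0,T_2]})\ge\inf f\,|v+w|_1-F'$, and the trivial upper bound $L^g(\gamma_\infty|_{[0,T_2]})\le\sup f\,(|v|_1+|w|_1)$, this is a contradiction once one also arranges $\inf f\,|v+w|_1>2F'$ (a further norm condition on $T_2$, via (\ref{E1})). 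Your final paragraph gestures at a ``subadditivity'' contradiction, but subadditivity of $\dist(\cdot,\partial\T)$ is not enough: what is needed is the strict reverse triangle inequality of Lemma~\ref{L2}, which exploits the asymmetry between a short piece deep inside $\T$ and a long piece hugging $\partial\T$.
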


\begin{proof}
Assume that the claim is false. Then there exist $\kappa',F'<\infty$, $\e'>0$, a sequence $G'_n<\infty$ and 
a sequence of $(G'_n,\e')$-timelike and $F'$-almost maximal curves $\gamma_n\colon [s_n,t_n]\to V/\Gamma$ with 
$$\|\gamma_n(t_n)-\gamma_n(s_n)\|\ge n\text{ and }\dist(\gamma_n(t_n)-\gamma_n(s_n),\partial \T)\le \kappa'.$$
Choose $[a_n,b_n]\subseteq [s_n,t_n]$ with $\|\gamma(b_n)-\gamma(a_n)\|\ge G'_n$ and 
$\gamma(b_n)-\gamma(a_n)\in \T_{\e'}$. If $\e' \|\gamma(b_n)-\gamma(a_n)\| > \kappa'$ the contradiction is obvious.  
We have 
$$\dist(\gamma(b_n)-\gamma(a_n),\partial\T)\ge \e'\|\gamma(b_n)-\gamma(a_n)\|> \kappa'.$$
Since $\dist(\gamma(t_n)-\gamma(s_n),\partial\T)\ge \dist(\gamma(b_n)-\gamma(a_n),\partial\T)$, 
we get $\dist(\gamma(t_n)-\gamma(s_n),\partial\T)>\kappa'$.

Therefore we can assume that $\e' G'_n\le \e' \|\gamma(b_n)-\gamma(a_n)\| \le\kappa'$. Note that we can further 
assume that $G'_n= \e'$, since we have $\|\gamma(b_n)-\gamma(a_n)\|\ge \dist(\gamma(b_n)-\gamma(a_n),\partial \T)$.

First we consider the case that $\|\gamma_n(t_n)-\gamma_n(a_n)\|$ is unbounded. We can pass to a subsequence and 
assume that $\|\gamma_n(t_n)-\gamma_n(a_n)\|\to \infty$. We can assume that $\gamma_n$ is parameterized by 
$\|.\|$-arclength. With (\ref{E1d}) we know that $L^{\|.\|}(\gamma_n|_{[a_n,t_n]})
=t_n-a_n\to \infty$ for $n\to\infty$. By shifting the parameter we can assume that $a_n\equiv 0$. 

According to the limit-curve lemma there exists a subsequence of $\{\gamma_n|_{[0,t_n]}\}_{n\in \N}$ converging
 uniformly on compact
sets to a future pointing curve $\gamma_\infty \colon [0,\infty)\to V/\Gamma$. It is classical that $\gamma_\infty$ 
is $F'$-almost maximal as well (\cite{bee} proposition 14.3). The fact that there exists a $T_0\ge 0$ with 
$$\|\gamma_\infty (T_0)-\gamma_\infty(0)\|\in \left[\e',\frac{\kappa'}{\e'}\right]\text{ and }
\gamma_\infty(T_0)-\gamma_\infty(0)\in \T_{\e'}$$  
is ensured by the uniform convergence and the continuity of the functions $v\mapsto \|v\|$ and $v\mapsto
\dist(v,\partial\T)$. With the same argument we get that 
$$\dist(\gamma_\infty(T)-\gamma_\infty(0),\partial\T)\le \kappa'$$
for all $T\ge 0$. 

Set $\lambda_1 :=\frac{2\sup f}{\inf f}$ and $\mu:=\frac{c\;\eta}{4\lambda_1^2 C}$. Choose $T_1>T_0$ such that we 
have 
$$\dist(\gamma_\infty(T)-\gamma_\infty(T_1),\partial\T)\le \mu\; \e'^2
\le \mu \dist(\gamma_\infty(T_1)-\gamma_\infty(0),\partial\T)$$
for all $T>T_1$. Further choose $T_2>T_1$ with 
$$\|\gamma_\infty(T_2)-\gamma_\infty(T_1)\|\ge \mu \|\gamma_\infty(T_1)-\gamma_\infty(0)\|$$
and 
\begin{equation}\label{E3}
\|\gamma_\infty(T_2)-\gamma_\infty(0)\|\ge \frac{4F'^2}{c\,\e'^2\inf f^2}+1.
\end{equation}
On the one hand we get 
$$\frac{\inf f}{2} |\gamma_\infty(T_2)-\gamma_\infty(0)|_1\ge \sup f (|\gamma_\infty(T_2)-\gamma_\infty(T_1)|_1
+|\gamma_\infty(T_1)-\gamma_\infty(0)|_1)$$
with lemma \ref{L2}. On the other hand, using (\ref{E3}), we have  
\begin{align*}
\inf f|\gamma_\infty(T_2)-\gamma_\infty(0)|_1&\ge \inf f\sqrt{c\dist(\gamma_\infty(T_2)-\gamma_\infty(0),\partial\T)
\|\gamma_\infty(T_2)-\gamma_\infty(0)\|}\\
&\ge \inf f\sqrt{c\dist(\gamma_\infty(T_0)-\gamma_\infty(0),\partial\T)\|\gamma_\infty(T_2)-\gamma_\infty(0)\|}\\
&\ge \inf f\;\e'\sqrt{c\;\|\gamma_\infty(T_2)-\gamma_\infty(0)\|} > 2F'.
\end{align*}
Therefore we get
\begin{align*}
L^g(\gamma_\infty|_{[0,T_2]})&\ge \inf f |\gamma_\infty(T_2)-\gamma_\infty(0)|_1-F'\\
&>\frac{\inf f}{2} |\gamma_\infty(T_2)-\gamma_\infty(0)|_1\\
&\ge \sup f (|\gamma_\infty(T_2)-\gamma_\infty(T_1)|_1+|\gamma_\infty(T_1)-\gamma_\infty(0)|_1)\\
&\ge L^g(\gamma_\infty|_{[0,T_1]})+L^g(\gamma_\infty|_{[T_1,T_2]}).
\end{align*}
Consequently the sequence $\|\gamma_n(t_n)-\gamma_n(a_n)\|$ has to be bounded.

In the other case $\|\gamma_n(b_n)-\gamma_n(s_n)\|\to \infty$, we obtain an analogous contradiction.
Since 
$$\|\gamma_n(t_n)-\gamma_n(s_n)\|\le \|\gamma_n(b_n)-\gamma_n(s_n)\|+\|\gamma_n(t_n)-\gamma_n(a_n)\|,$$
we have a contradiction to the assumption that $\|\gamma_n(t_n)-\gamma_n(s_n)\|\to \infty$ for $n\to\infty$.
This finishes the proof.
\end{proof}

Denote by 
$$\diam(\Gamma,\|.\|):=\frac{1}{2}\inf\left\{\max_{1\le i\le m}\|k_i\||\;\langle k_1,\ldots ,k_m\rangle_\Z=\Gamma\right\},$$
where $\langle k_1,\ldots ,k_m\rangle_\Z$ denotes the $\Z$-span of $k_1,\ldots ,k_m\in \Gamma$.
Since $\Gamma$ is a co-compact lattice there exists for all $x,y\in V$ a $l_{x,y}\in\Gamma$ with 
$$\|x-(y+l_{x,y})\|\le \diam(\Gamma,\|.\|).$$

\begin{lemma}
There exists $\fil<\infty$ such that for all $x,y\in V$ there exists $k_{x,y}\in \Gamma$ with 
$\|x-(y+k_{x,y})\|\le \fil$ and $y+k_{x,y}\in x+\T$.
\end{lemma}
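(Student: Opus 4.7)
The plan is to combine the preceding observation---which for every $x, y \in V$ yields $l_{x,y} \in \Gamma$ with $\|x - (y + l_{x,y})\| \le \diam(\Gamma, \|.\|)$ but no control on causality---with a uniform correction by an integer multiple of a single lattice vector $\gamma_0 \in \Gamma \cap \T$, chosen once and for all with a quantitative cone margin $\e_0 > 0$.

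To produce $\gamma_0$, fix any $v \in \T_{2\e_0}$ for some small $\e_0 > 0$ and abbreviate $D_0 := \diam(\Gamma, \|.\|)$. For every $t > 0$, co-compactness of $\Gamma$ provides $\gamma_0(t) \in \Gamma$ with $\|\gamma_0(t) - t v\| \le D_0$. Using positive homogeneity of $\dist(.,\partial \T)$ and its $1$-Lipschitz character, one finds $\dist(\gamma_0(t),\partial \T) \ge 2 \e_0 t \|v\| - D_0$ while $\|\gamma_0(t)\| \le t \|v\| + D_0$, so $\gamma_0(t) \in \T_{\e_0}$ for all sufficiently large $t$. Fix one such $t$ and set $\gamma_0 := \gamma_0(t) \in \Gamma \cap \T_{\e_0}$.

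Now choose $N \in \N$ large enough that $N \e_0 \|\gamma_0\| > D_0$ and set $k_{x,y} := l_{x,y} + N \gamma_0$. Writing $w := y + l_{x,y} - x$, we have $\|w\| \le D_0 < N \e_0 \|\gamma_0\| \le \dist(N \gamma_0, \partial \T)$. Since $N \gamma_0 \in \T$ and the open $\|.\|$-ball about $N \gamma_0$ of radius $\dist(N \gamma_0, \partial \T)$ does not meet $\partial \T$ and is therefore contained in $\T$, we conclude $y + k_{x,y} - x = N \gamma_0 + w \in \T$. At the same time $\|x - (y + k_{x,y})\| \le D_0 + N \|\gamma_0\|$, so $\fil := D_0 + N \|\gamma_0\|$ works.

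The only step requiring thought is the construction of $\gamma_0 \in \Gamma \cap \T_{\e_0}$ with a fixed cone margin; once such a vector is in hand, the rest is a uniform translation in the direction of $\gamma_0$.
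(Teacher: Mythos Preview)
Your proof is correct and follows the same idea as the paper: shift by a fixed vector far enough inside $\T$ that the $\diam(\Gamma,\|.\|)$ error cannot push you across $\partial\T$. The paper is marginally more direct: it never needs the shift vector to lie in $\Gamma$, so it skips your construction of $\gamma_0\in\Gamma\cap\T_{\e_0}$ and simply sets $k_{x,y}:=l_{x+\Lambda v,\,y}$ for any fixed $v\in\T\setminus\partial\T$ and large enough $\Lambda$, applying the lattice approximation to the already-shifted point $x+\Lambda v$ rather than to $x$.
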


\begin{proof}
Choose $v\in \T\setminus \partial\T$ and $\e>0$ such that we have $B_\e(v)\subseteq \T$. Since $\T$ is a cone we have
$B_{\lambda \e}(\lambda v)\subseteq \T$ for all $\lambda \ge 0$. Choose $\frac{\diam(\Gamma,\|.\|)}{\e}
<\Lambda <\infty$. Then we have $B_{\diam(\Gamma,\|.\|)}(\Lambda v)\subseteq \T$. 
Set $k_{x,y}:=l_{x+\Lambda v,y}$ and $\fil:= \diam(\Gamma,\|.\|) +\Lambda \|v\|$.
\end{proof}

\begin{proof}[Proof of proposition \ref{P1}]
Let $F,G<\infty$, $\e>0$ and $\gamma\colon I\to V/\Gamma$ be a $F$-almost maximal $(G,\e)$-timelike curve.

(i) Choose $[a',b']\subseteq I$ with $\|\gamma(b')-\gamma(a')\|\ge G$ and $\gamma(b')-\gamma(a')\in\T_\e$. 
Set $G_0:=\max\left\{\frac{2F}{\sqrt{c\,\e}\inf f},\e\right\}$. If $G\ge G_0$ we get
\begin{align*}
\inf f|\gamma(b')-\gamma(a')|_1&\ge \inf f\sqrt{c\dist(\gamma(b')-\gamma(a'),\partial\T)\|\gamma(b')-\gamma(a')\|}\\
&\ge \inf f\sqrt{c\;\e}\;\|\gamma(b')-\gamma(a')\|\ge \inf f \sqrt{c\;\e}\; G\ge 2F.
\end{align*}
For any partition $\{\gamma|_{[a_i,b_i]}\}_{1\le i\le N}$ of $\gamma|_{[a',b']}$ we have
\begin{align*}
\sup\,f \sum |\gamma(b_i)-\gamma(a_i)|_1&\ge \sum L^g(\gamma|_{[a_i,b_i]})=L^g(\gamma|_{[a',b']})\\
&\ge \inf\,f |\gamma(b')-\gamma(a')|_1-F\\
&\ge \frac{\inf\,f}{2}|\gamma(b')-\gamma(a')|_1.
\end{align*}
Apply lemma \ref{L1} to $v_i:=\gamma(b_i)-\gamma(a_i)$ with $\lambda_0 :=\frac{\inf\,f}{2\sup\,f}$ and $\e>0$ 
as above. Consequently there exist $\e_0>0$ and $[a,b]\subseteq [a',b']$ with 
$\|\gamma(b)-\gamma(a)\|\in [\e,2G_0]$ (note that $\dist(v,\partial\T)\le \|v\|$ for all $v\in\T$) 
and $\gamma(b)-\gamma(a)\in \T_{\e_0}$. 

(ii) Let $s<a<b<t \in I$. If $\|\gamma(b)-\gamma(s)\|\ge \nu:=\frac{4(F+1)^2}{\inf f^2 c\;\e_0 \e}$ we get
\begin{align*}
\inf f|\gamma(b)-\gamma(s)|_1&\ge \inf f\sqrt{c\dist(\gamma(b)-\gamma(s),\partial\T)\|\gamma(b)-\gamma(s)\|}\\
&\ge \inf f\sqrt{c\;\e_0 \e\|\gamma(b)-\gamma(s)\|}\ge 2(F+1).
\end{align*}
Consequently we have 
$$\sup f(|\gamma(a)-\gamma(s)|_1+|\gamma(b)-\gamma(a)|_1)> \frac{\inf f}{2} |\gamma(b)-\gamma(s)|_1.$$
Set $\lambda_1:=\frac{2\sup f}{\inf f}$. Recall the definition of $\mu:=\frac{c\;\eta}{4\lambda_1^2 C}$. If 
$\|\gamma(a)-\gamma(s)\|\ge \mu\; (2 G_0)$ we get 
$$\dist(\gamma(a)-\gamma(s),\partial \T)> \mu\dist(\gamma(b)-\gamma(a),\partial \T)\ge \mu\; \e_0 \e$$
with lemma \ref{L2}. Note that we have 
$$\|\gamma(a)-\gamma(s)\|\ge \|\gamma(b)-\gamma(s)\|-\|\gamma(b)-\gamma(a)\|\ge \nu-2 G_0.$$
Consequently we get that if 
$$\sup_{s'\in I,\; s'<a}\|\gamma(b)-\gamma(s')\|\ge \max\left\{\nu-2 G_0,2 \mu\; G_0\right\}:=H_0$$
there exists $s\in I$, $s<a$ with 
$$\|\gamma(a)-\gamma(s)\|= H_0\text{ and }\gamma(a)-\gamma(s)\in \T_{\delta_0}$$
for $\delta_0:=\frac{\mu\;\e_0\e}{H_0}$.
In the same way we obtain the existence of a parameter $t\in I$, $t>b$ with 
$\|\gamma(t)-\gamma(b)\|= H_0$ and $\gamma(t)-\gamma(b)\in \T_{\delta_0}$,
if we have that $\sup_{t'\in I,\;t'>b}\|\gamma(t')-\gamma(b)\|\ge H_0$.

(iii) Define $\kappa':=3\fil$, $G':=2\mu\; G_0$, $F':=F$ and $\e':=\delta_0$ and $K_0:=K'(3\fil, 2\mu G_0,F,\delta_0)$, 
according to lemma \ref{L5}. Then there exists $\tau\in I$, $b<\tau$ with 
$$\|\gamma(\tau)-\gamma(b)\|=K_0\text{ and }\dist(\gamma(\tau)-\gamma(b),\partial\T)\ge 3\fil$$
if $\sup_{t'\in I,\;t'>b}\|\gamma(t')-\gamma(b)\|\ge K_0$.
Analogously, if we have $\sup_{s'\in I,\;s'<a}\|\gamma(a)-\gamma(s')\|\ge K_0$, there exists $\sigma\in I$, $\sigma<a$ with 
$$\|\gamma(a)-\gamma(\sigma)\|=K_0\text{ and }\dist(\gamma(a)-\gamma(\sigma),\partial\T)\ge 3\fil.$$
We saw in step (ii) that for intervals $[b,t]\subseteq I$ with $\|\gamma(t)-\gamma(b)\|=H_0$ we have 
$$\gamma(t)-\gamma(b)\in \T_{\delta_0}.$$
We will carry this over to all intervals $[s,t]\subset I$ with $\|\gamma(t)-\gamma(s)\|$ sufficiently large via the 
following cut-and-paste argument.

Note first that it suffices to consider the case $\sup_{t'\in I,\;t'>b}\|\gamma(t')-\gamma(b)\|\ge K_0$.
The case $\sup_{s'\in I,\;s'<a}\|\gamma(a)-\gamma(s')\|\ge K_0$ can be reduced to the former by considering 
$\gamma_{\text{inv}}(t):=\gamma(-t)$ and the opposite time-orientation on $(V/\Gamma,g)$. 

Therefore we can assume that $\sup_{t'\in I,\,t'>b}\|\gamma(t')-\gamma(b)\|\ge K_0$. Choose $\tau\in I$, $b<\tau$ 
with $\|\gamma(\tau)-\gamma(b)\|=K_0$. We have $\dist(\gamma(\tau)-\gamma(b),\partial\T)\ge 3\fil$. 
Let $[s,t]\subset I$ be an interval mutually disjoint to $[a,\tau]$.
We can choose future pointing curves $\zeta_i\colon [a_i,b_i]\to V/\Gamma$ $(i=1,\ldots ,6)$ with 
$L^{\|.\|}(\zeta_{1,2,4,5})\le \fil$ such that 
$$\gamma':=\gamma|_{[a,b]}\ast \zeta_1\ast\gamma|_{[s,t]}\ast\zeta_2\ast \gamma|_{[\tau,s]}\ast\zeta_3$$
is future pointing and homotopic with fixed endpoints to $\gamma|_{[a,t]}$ if $s\ge \tau$ and such that 
$$\gamma'':=\gamma|_{[s,t]}\ast\zeta_4\ast\gamma|_{[a,b]}\ast\zeta_5\ast\gamma|_{[t,a]}\ast\zeta_6$$
is future pointing and homotopic with fixed endpoints to $\gamma|_{[s,\tau]}$ if $t\le a$.

This can be seen as follows: Assume first that $s\ge \tau$.
Choose future pointing curves $\zeta_1,\zeta_2$ with $L^{\|.\|}(\zeta_i)\le \fil$ 
connecting $\gamma(b)$ with $\gamma(s)$ and $\gamma(t)$ with $\gamma(\tau)$.
Now consider a lift $\overline{\gamma}$ of $\gamma$ to $V$ and a lift $\overline{\xi}_1$ of 
$\zeta_1\ast\gamma|_{[s,t]}\ast\zeta_2\ast \gamma|_{[\tau,s]}$ starting at
$\overline{\gamma}(b)$. Let $q$ be the terminal point of $\overline{\xi}_1$. Then we have
$$\overline{\gamma}(t)-q=[\gamma(\tau)-\gamma(t)]+\sum_{i=1}^2 [\zeta_i(b_i)-\zeta_i(a_i)].$$
By construction we have $\|\sum_{i=1}^2 [\zeta_i(b_i)-\zeta_i(a_i)]\|\le 2\fil$. Since 
$\dist(\gamma(\tau)-\gamma(t),\partial\T)\ge 3\fil$ we get 
$\overline{\gamma}(t)-q\in \T$. Choose a future pointing curve $\zeta_3\colon [a_3,b_3]\to V/\Gamma$ with 
$\zeta_3(a_3)=\gamma(s)$ and $\zeta_3(b_3)-\zeta_3(a_3)=\overline{\gamma}(t)-q$. This completes the construction 
of $\gamma'$. 

If $t\le a$ choose future pointing curves $\zeta_4,\zeta_5$ with $L^{\|.\|}(\zeta_{4,5})\le \fil$ 
connecting $\gamma(t)$ with $\gamma(a)$ and $\gamma(b)$ with $\gamma(t)$.
Consider a lift $\overline{\gamma}$ of $\gamma$ to $V$ and a lift $\overline{\xi}_2$ of 
$\zeta_4\ast\gamma|_{[a,b]}\ast\zeta_5\ast \gamma|_{[t,a]}$ starting at
$\overline{\gamma}(t)$. Let $q$ be the terminal point of $\overline{\xi}_2$. Then we have
$$\overline{\gamma}(\tau)-q=[\gamma(\tau)-\gamma(b)]+\sum_{i=4}^5 [\zeta_i(b_i)-\zeta_i(a_i)].$$
By construction we have $\|\sum_{i=4}^5 [\zeta_i(b_i)-\zeta_i(a_i)]\|\le 2\fil$. Since 
$\dist(\gamma(\tau)-\gamma(b),\partial\T)\ge 3\fil$ we get 
$\overline{\gamma}(\tau)-q\in \T$. Choose a futurepointing curve $\zeta_6\colon [a_6,b_6]\to V/\Gamma$ with 
$\zeta_6(a_6)=\gamma(a)$ and $\zeta_6(b_6)-\zeta_6(a_6)=\overline{\gamma}(\tau)-q$.

Set $F_0:=F+\sup f \sqrt{C}K_0$. We claim that $\gamma'$ and $\gamma''$ are $F_0$-almost 
maximal. Indeed we have
\begin{align*}
L^g(\gamma')\ge L^g(\gamma|_{[a,t]})-L^g(\gamma|_{[b,\tau]})&\ge 
d(\overline{\gamma}(a),\overline{\gamma}(t))-L^g(\gamma|_{[b,\tau]})-F\\
&\ge d(\overline{\gamma}(a),\overline{\gamma}(t))-(F+\sup f \sqrt{C}K_0).
\end{align*}
Analogously we get $L^g(\gamma')\ge d(\overline{\gamma}(s),\overline{\gamma}(\tau))-(F+\sup f \sqrt{C}K_0)$.

Set $v:=[\gamma(b)-\gamma(a)]+[\zeta_1(b_1)-\zeta_1(a_1)]$ and $w:=\gamma(t)-\gamma(s)$.
Note that we have $\|v\|\le 2G_0+\fil$ and $\dist(v,\partial\T)\ge \e_0\e$. 
If $\|v+w\|\ge \frac{4(F_0+1)^2}{\inf f^2 c\;\e_0 \e}$ we get
\begin{align*}
\inf f|v+w|_1&\ge \inf f\sqrt{c\dist(v+w,\partial\T)\|v+w\|}\\
&\ge \inf f\sqrt{c\;\e_0 \e\|v+w\|}\ge 2(F_0+1).
\end{align*}
Consequently we have 
$$\sup f(|v|_1+|w|_1)> \frac{\inf f}{2} |v+w|_1.$$
Set $\lambda_1:=\frac{2\sup f}{\inf f}$. Recall the definition of $\mu:=\frac{c\;\eta}{4\lambda_1^2 C}$. If 
$\|w\|\ge \mu\; (2G_0+\fil)$ we get 
$$\dist(w,\partial \T)> \mu\dist(v,\partial \T)\ge \mu\; \e_0 \e$$
with lemma \ref{L2}. 

Set 
$$K:= \frac{4}{\eta}\max\left\{\mu(2G_0+\fil),\frac{4(F_0+1)^2}{\inf f^2 c\;\e_0 \e\;},G_0+K_0\right\}.$$ 
Let $[s,t]\subset I$ with $\|\gamma(t)-\gamma(s)\|\ge K$. We want to show that there exists $\delta>0$
such that $\gamma(t)-\gamma(s)\in \T_\delta$. If $\|\gamma(t)-\gamma(s)\|\ge 2K$ we can partition $[s,t]$ into 
mutually disjoint subintervals $[s_i,t_i]$ with $\|\gamma(t_i)-\gamma(s_i)\|\in [K,2K]$. If we have 
$\gamma(t_i)-\gamma(s_i)\in\T_\delta$ for all $i$ and some $\delta>0$, we get $\gamma(t)-\gamma(s)\in\T_\delta$, 
using corollary \ref{C0}. Therefore we can assume that $\|\gamma(t)-\gamma(s)\|\in [K,2K]$. 
If we have $[s,t]\setminus (a,\tau_0)= \emptyset$, we are done since we can then apply the above cut-and paste 
argument and obtain $\dist(\gamma(t)-\gamma(s),\partial\T)\ge \mu \e_0\e$. Then we have 
$$\gamma(t)-\gamma(s)\in \T_{\delta'}$$
for $\delta':=\frac{\mu\e_0\e}{2K}$. 

If we have $[s,t]\setminus (a,\tau_0)\neq \emptyset$, then $[s,a]$ or $[\tau,t]\neq \emptyset$. By the choice of $K$, 
we have
$$\max\{\|\gamma(a)-\gamma(s)\|,\|\gamma(t)-\gamma(\tau)\|\}
\ge \frac{1}{\eta}\max\left\{\mu(2G_0+\fil),\frac{4(F_0+1)^2}{\inf f^2 c\;\e_0 \e\;}\right\}.$$ 
Recall that we have $\gamma(b)-\gamma(a)\in \T_{\e_0}$ and $\gamma(\tau_0)-\gamma(b)\in \T_{\e'}$ 
for $\e':=\frac{3\fil}{K_0}$. Again with the above cut-and-paste argument we obtain 
\begin{equation}\label{E6}
\gamma(t)-\gamma(\tau)\text{ or }\gamma(a)-\gamma(s)\in \T_{\delta''}
\end{equation}
for $\delta'':=\min\{\delta', \e_0,\e'\}$.
Note the following fact. Let $v,w\in \T$ with $\|w\|\le \|v\|$ and $v\in \T_\e$. Then we have $v+w\in \T_{\e/2}$. 
Combining this with (\ref{E6}) we get 
$$\gamma(t)-\gamma(s)\in \T_{\delta},$$
for $\delta:=\frac{1}{2}\delta''$. This finishes the proof.
\end{proof}

\section{The Stable Time Separation}\label{S4}

\begin{proposition}\label{P10}
There exists a positively homogenous concave function $\mathfrak{l}\colon \T\to [0,\infty)$ such that:
\begin{enumerate}
\item For every $\e>0$ there exists a $K(\e)>0$ such that
$$|\mathfrak{l}(v)-d(x,x+v)|\le K(\e)$$
for all $v\in\T_\e$ and all $x\in V$.
\item $\inf f |v|_1\le \mathfrak{l}(v)\le \sup f |v|_1$ for all $v\in \T$.
\item $\mathfrak{l}(v+w)\ge \mathfrak{l}(v)+\mathfrak{l}(w)$ for all $v,w\in\T$.
\end{enumerate}
\end{proposition}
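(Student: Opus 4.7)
The plan is to define $\mathfrak{l}(v):=\lim_{n\to\infty} d(x,x+nv)/n$, show existence and base-point independence, verify (2) and (3) together with positive homogeneity and concavity, and finally establish the uniform estimate (1), which is the main technical point.

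For existence and base-point independence, I would first control the oscillation of $x\mapsto d(x,x+nv)$ on $V/\Gamma$. Set $a_n(v):=\inf_x d(x,x+nv)$ and $A_n(v):=\sup_x d(x,x+nv)$; both are $\Gamma$-periodic in $x$ by $\Gamma$-invariance of $\overline{g}$. Applying Theorem \ref{T2a} on the set $\{(p,q)\in V\times V\mid q-p\in\T_\e\}$, which is convex by Corollary \ref{C0}, together with cocompactness of $\Gamma$, one gets
$$A_n(v)-a_n(v)\le 2L(\e)\diam(\Gamma,\|.\|)\quad\text{for every }v\in\T_\e,$$
because for $x,y\in V$ and $k\in\Gamma$ with $\|x-y-k\|\le \diam(\Gamma,\|.\|)$, the segment in $V\times V$ from $(x,x+nv)$ to $(y+k,y+k+nv)$ has constant difference vector $nv\in\T_\e$ and therefore stays inside the Lipschitz domain. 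The reverse triangle inequality, after taking the infimum over $x$, yields superadditivity $a_{m+n}(v)\ge a_m(v)+a_n(v)$; Fekete's lemma then gives $\lim a_n(v)/n=\sup_n a_n(v)/n$, and the oscillation bound transports the convergence to $d(x,x+nv)/n$ uniformly in $x$, yielding existence together with base-point independence. On $\partial\T$ everything collapses to zero by (2).

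Properties (2) and (3) follow at once. Passing $\inf f\,|v|_1\le d(x,x+v)\le\sup f\,|v|_1$ through $v\mapsto nv$ and dividing by $n$ gives (2). For (3), reverse triangle at the intermediate point $x+nv$ yields
$$\frac{d(x,x+n(v+w))}{n}\ge\frac{d(x,x+nv)}{n}+\frac{d(x+nv,x+nv+nw)}{n};$$
the last term tends to $\mathfrak{l}(w)$ by uniform base-point independence, proving $\mathfrak{l}(v+w)\ge\mathfrak{l}(v)+\mathfrak{l}(w)$. Rational positive homogeneity $\mathfrak{l}((p/q)v)=(p/q)\mathfrak{l}(v)$ is the subsequence $n=qm$ in the definition, and positive homogeneity together with (3) forces $\mathfrak{l}$ to be concave on the convex cone $\T$; concavity on $\T\setminus\partial\T$ implies continuity there, extending homogeneity to all positive reals.

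The main obstacle is claim (1). The upper direction is routine: $\mathfrak{l}(v)=\sup_n a_n(v)/n\ge a_1(v)$ combined with the oscillation bound yields $d(x,x+v)\le A_1(v)\le a_1(v)+2L(\e)\diam\le\mathfrak{l}(v)+2L(\e)\diam$. The lower direction $d(x,x+v)\ge\mathfrak{l}(v)-K(\e)$ is the estimate flagged in the introduction as nontrivial even in the Riemannian case \cite{bu}. Since the oscillation of $x\mapsto d(x,x+v)$ is already bounded by $2L(\e)\diam$, it suffices to produce a single base point $y$ with $d(y,y+v)\ge\mathfrak{l}(v)-K(\e)$, and I plan to do this by a Burago-style cut-and-paste adapted to time separations. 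Given $v\in\T_\e$ and $\delta>0$, pick $N=N(\delta)$ with $a_N(v)\ge N(\mathfrak{l}(v)-\delta)$, fix a future-pointing causal curve $\gamma_N$ from $0$ to $Nv$ with $g$-length $\ge N(\mathfrak{l}(v)-2\delta)$, and partition $\gamma_N$ into $N$ equal-$g$-length subarcs, each of $g$-length $\ge\mathfrak{l}(v)-2\delta$. A pigeonhole argument on the reduced endpoints $\gamma_N(t_k)-kv$ in the compact torus $V/\Gamma$ produces a subarc whose displacement agrees with an integer multiple of $v$ modulo $\Gamma$ up to a residue in $V$ tending to $0$ as $N\to\infty$. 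Splicing this subarc with short future-pointing causal connectors of $\|.\|$-length at most $\fil$ (furnished by the unnumbered lemma in Section \ref{S2}), translating by $\Gamma$-elements, and appealing once more to Theorem \ref{T2a} to absorb the residues and the scaling factor, one constructs a causal curve from some $y$ to $y+v$ of $g$-length $\ge\mathfrak{l}(v)-K(\e)$. The hardest technical point is preserving causality and the $\T_\e$-condition at every splice, which is precisely where Proposition \ref{P1} and the compactness machinery of Section \ref{S2} become indispensable.
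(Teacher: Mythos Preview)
Your treatment of existence, base-point independence, (2), (3), positive homogeneity, concavity, and the upper half of (1) is correct and in fact cleaner than the paper's route: Fekete plus the oscillation bound $A_n(v)-a_n(v)\le 2L(\e)\diam(\Gamma,\|.\|)$ does all of this at once.

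The gap is in the lower half of (1), and it is exactly the point you flag as hardest. Your pigeonhole on the $N+1$ reduced points $\gamma_N(t_k)-kv\in V/\Gamma$ produces indices $i<j$ with $\gamma_N(t_j)-\gamma_N(t_i)=(j-i)v+k+o(1)$ for some $k\in\Gamma$; the subarc has $g$-length $(j-i)(\mathfrak{l}(v)-2\delta)$. After splicing and using Theorem~\ref{T2a} this yields $d(y,y+(j-i)v)\ge (j-i)\mathfrak{l}(v)-C$, i.e.\ information about $a_{j-i}(v)$, not $a_1(v)$. There is no ``scaling'' operation that converts this into $d(y',y'+v)\ge \mathfrak{l}(v)-K(\e)$: the integer $m=j-i$ is uncontrolled (it can drift to infinity with $N$), and dividing the inequality by $m$ only re-derives $\sup_m a_m(v)/m\ge \mathfrak{l}(v)$, which Fekete already gave you. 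Superadditivity alone cannot bound $a_n-n\mathfrak{l}(v)$ from below; one needs a \emph{reverse} inequality of the type $a_{2n}\le 2a_n+K''$.

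This reverse inequality is precisely what the paper supplies. Following Burago, the paper proves the abstract Lemma~\ref{L10-4}: if $|2f(t)-f(2t)|\le K$ and $f(zt)\ge zf(t)-K$ for $z=2,3$, then $|f(t)-at|\le 2K$. The hypothesis $f(zt)\ge zf(t)-K'$ is your easy direction (Lemma~\ref{L10-3}). The crucial opposite bound $2d(x,x+v)\ge d(x,x+2v)-K''$ (Lemma~\ref{L10-}) comes from Burago's halving lemma (Lemma~\ref{L10--}): any continuous curve in $V$ admits at most $[m/2]$ disjoint subarcs whose displacements sum to exactly half the total displacement. Applied to a maximizer from $x$ to $x+2v$, these $\le m/2$ pieces are spliced with a bounded number of connectors of $\|.\|$-length $\le\fil$ into a causal curve from $x$ to (approximately) $x+v$; the complementary pieces give a second such curve, and adding the two estimates yields $d(x,x+2v)\le 2d(x,x+v)+2mL(\e/2)\fil$. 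The point is that the number of pieces is bounded by the dimension, independently of $v$, so the splicing cost is uniformly bounded. Your pigeonhole does not control the number or the multiplier; Burago's halving lemma does, and that is the missing idea.
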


The following lemma is an adapted version of lemma 1 in \cite{bu}.
\begin{lemma}\label{L10-4}
Let $K<\infty$ and $f\colon [0,\infty)\to [0,\infty)$ be a $L$-Lipschitz continuous function such that
\begin{enumerate}
\item $f(2t)-2f(t)\le K$,
\item $zf(t)-f(zt)\le K$ for $z=2,3$
\end{enumerate}
and all $t\ge 0$. Then there exists an $a\in \mathbb{R}$, such that
$$|f(t)-at|\le 2K,$$
for all $t\ge 0$.
\end{lemma}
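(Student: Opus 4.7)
The plan is to iterate the $z=2$ hypotheses to \emph{define} an asymptotic slope along dyadic sequences, then to use the $z=3$ hypothesis together with Lipschitz continuity to show that this slope is independent of the starting point.

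First I combine (1) with the $z=2$ case of (2) to obtain $|f(2t)-2f(t)|\le K$. Iterating yields
$$|f(2^n t)-2^n f(t)|\le (2^n-1)K$$
for every $n\ge 0$ and every $t\ge 0$. For $t>0$, dividing the one-step inequality by $2^{n+1}t$ shows that $\{f(2^nt)/(2^nt)\}_n$ is Cauchy, so I can define
$$\alpha(t):=\lim_{n\to\infty}\frac{f(2^nt)}{2^nt}.$$
Passing to the limit in the iterated inequality gives $|f(t)-\alpha(t)\,t|\le K$ for every $t>0$. The identity $\alpha(2t)=\alpha(t)$ is immediate. Moreover, the product $\Phi(t):=\alpha(t)\,t=\lim_n f(2^nt)/2^n$ inherits $L$-Lipschitz continuity from $f$ via $|f(2^nt)/2^n-f(2^ns)/2^n|\le L|t-s|$; hence $\alpha(t)=\Phi(t)/t$ is continuous on $(0,\infty)$.

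Next I feed in the $z=3$ hypothesis. Applying $f(3\cdot 2^nt)\ge 3f(2^nt)-K$, dividing by $3\cdot 2^nt$, and letting $n\to\infty$ gives $\alpha(3t)\ge\alpha(t)$. Combined with $\alpha(2t)=\alpha(t)$ this yields $\alpha(3^nt/2^m)\ge\alpha(t)$ for all non-negative integers $n,m$. Since $\log 3/\log 2$ is irrational, $\{3^n/2^m:n,m\ge 0\}$ is dense in $(0,\infty)$; continuity of $\alpha$ forces $\alpha(s)\ge\alpha(t)$ for all $s,t>0$, and symmetry in $s,t$ then gives $\alpha\equiv a$ for some constant $a\in\R$. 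Substituting back yields $|f(t)-at|\le K\le 2K$ for $t>0$, and the case $t=0$ reduces to $|f(0)|\le K$, which follows directly from the $z=2$ inequalities evaluated at $t=0$.

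The main obstacle is the constancy of $\alpha$. The $z=2$ relation alone forces invariance only under doubling, which leaves $\alpha$ free on a dyadic fundamental domain. The $z=3$ hypothesis supplies a second, logarithmically incommensurable scaling under which $\alpha$ is monotone, and continuity of $\Phi$ (itself a consequence of the Lipschitz assumption on $f$) then promotes a one-sided inequality on a dense orbit to equality everywhere. Both hypotheses on $f(zt)$ and the Lipschitz assumption on $f$ are therefore used essentially in this final step.
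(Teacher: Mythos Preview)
Your proof is correct and follows the same overall strategy as the paper: iterate the scaling hypotheses to define an asymptotic slope, and use the incommensurability of powers of $2$ and $3$ together with Lipschitz continuity to show this slope is independent of the base point.

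The execution differs in two places worth noting. The paper uses only condition (2) to establish existence of $a_z(t):=\lim_n f(z^nt)/(z^nt)$ and then compares $a_2(t)$ with $a_3(s)$ directly via the Lipschitz estimate
\[
\left|\frac{f(2^nt)}{2^nt}-\frac{f(3^ms)}{3^ms}\right|\le L\,\frac{|2^nt-3^ms|}{2^nt}+\frac{|2^nt-3^ms|}{2^nt}\cdot\frac{f(3^ms)}{3^ms},
\]
invoking $\liminf_{m,n}|2^nt-3^ms|/(2^nt)=0$; condition (1) is brought in only at the end to obtain the bound $2K$. You instead combine (1) and the $z=2$ case of (2) from the outset to get the two-sided bound $|f(2t)-2f(t)|\le K$, which makes the dyadic sequence Cauchy and immediately gives $|f(t)-\alpha(t)t|\le K$. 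For the constancy of $\alpha$ you observe that $\Phi(t)=\alpha(t)t$ is itself $L$-Lipschitz, deduce continuity of $\alpha$, and combine the exact invariance $\alpha(2t)=\alpha(t)$ with the one-sided relation $\alpha(3t)\ge\alpha(t)$ and density of $\{3^n/2^m\}$. This monotonicity-plus-density argument is a clean alternative to the paper's direct comparison, and as a bonus your route delivers the sharper constant $K$ rather than $2K$.
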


\begin{proof}
We have 
$$z^n f(t)\le f(z^n t)+K\sum_{k=1}^n z^k$$
for all $t\ge 0$, $z=2,3$ and integers $n\in\Z_{\ge 0}$. This implies 
$$\frac{f(t)}{t}\le \frac{f(z^n t)}{z^n t}+\frac{2K}{t}.$$
From $f(t)\le L t+f(0)$ we get the existence of $\limsup_{n\to \infty}\frac{f(z^n t)}{z^n t}=:a_z(t)$.
Choose for $\e>0$ an integer $r\in\Z_{\ge 0}$ such that $\frac{f(z^{r}t)}{z^{r}t}\ge a_z(t)-\e$ and 
$\frac{2K}{z^{r}t}\le\e$. Then we have
$$\frac{f(z^{n+r}t)}{z^{n+r}t}\ge \frac{f(z^{r}t)}{z^{r}t}-\frac{2K}{z^{r}t}\ge a_z(t)-2\e$$
for all $n\ge 0$. Therefore the sequence $\left\{\frac{f(z^n t)}{z^n t}\right\}_{n}$ converges to 
$a_z(t)$.

We claim that $a_z(t)$ is independent of $t\ge 0$ and $z=2,3$. We have 
\begin{align*}
\left|\frac{f(2^n t)}{2^n t}-\frac{f(3^m s)}{3^m s}\right|&\le \left|\frac{f(2^n t)}{2^n t}
-\frac{f(3^m s)}{2^n t}\right|+\left|\frac{f(3^m s)}{2^n t}-\frac{f(3^m s)}{3^m s}\right|\\
&\le L\frac{|2^n t-3^m s|}{2^n t}+\frac{|2^n t-3^m s|}{2^n t}\frac{f(3^m s)}{3^m s}
\end{align*}
for all $s,t\ge 0$. Since $\liminf_{m,n\to \infty} \frac{|2^n t-3^m s|}{2^n t} =0$ we get $a_2(t)=a_3(s)=:a$. 

Define $f(t)-at =:\delta (t)$. Then we get
$$|f(2^n t)-2^n(at+\delta(t))|\le \sum_{k=1}^n 2^k K\le 2^{n+1} K$$
and 
$$\left|\frac{f(2^n t)}{2^n t}-\left(a+\frac{\delta(t)}{t}\right)\right|<\frac{2K}{t}.$$
Passing to the limit $n\to \infty$, we obtain the lemma.
\end{proof}

\begin{lemma}\label{L10-3}
For all $\varepsilon >0$ there exists a $K'=K'(\varepsilon)<\infty$ such that
$$d(x,x+zv)\ge zd(x,x+v)-K'$$
for $z=2,3$ and all $x,v\in V$ with $v\in\mathfrak{T}_\varepsilon$.
\end{lemma}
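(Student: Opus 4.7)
The plan is to exploit the super-additivity (reverse triangle inequality) of the Lorentzian time separation $d$ along chains of causally related points, combined with the $\Gamma$-invariance of $\overline{g}$ and the Lipschitz estimate from Theorem \ref{T2a}. Concretely, since $v\in \mathfrak{T}_\varepsilon\subset \mathfrak{T}$ the points $x, x+v, x+2v, x+3v$ lie on a future-directed causal chain in $V$, so
\[
d(x,x+zv)\ \ge\ \sum_{j=0}^{z-1} d(x+jv,\,x+(j+1)v)
\]
for $z=2,3$. It therefore suffices to show that each summand on the right differs from $d(x,x+v)$ by a universal constant depending only on $\varepsilon$.

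For each $j\in\{0,1,2\}$ the lattice $\Gamma$ is co-compact in $V$, so I choose $k_j\in\Gamma$ with $\|jv+k_j\|\le \diam(\Gamma,\|.\|)$ and set $\tilde{x}_j:=x+jv+k_j$, so that $\|\tilde{x}_j-x\|\le \diam(\Gamma,\|.\|)$. Since $\overline{g}=f^2\langle .,.\rangle_1$ with $f$ being $\Gamma$-invariant, the translation by $k_j$ is an isometry of $(V,\overline{g})$, hence
\[
d(x+jv,\,x+(j+1)v)\;=\;d(\tilde{x}_j,\,\tilde{x}_j+v).
\]
Both pairs $(x,x+v)$ and $(\tilde{x}_j,\tilde{x}_j+v)$ have difference equal to $v\in\mathfrak{T}_\varepsilon$, so Theorem \ref{T2a} applies and yields
\[
\bigl|\,d(\tilde{x}_j,\tilde{x}_j+v)-d(x,x+v)\,\bigr|\;\le\; 2L(\varepsilon)\,\|\tilde{x}_j-x\|\;\le\; 2L(\varepsilon)\diam(\Gamma,\|.\|).
\]

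Summing these estimates for $j=0,\dots,z-1$ and combining with the super-additivity above gives
\[
d(x,x+zv)\;\ge\; z\,d(x,x+v)\;-\;2(z-1)L(\varepsilon)\diam(\Gamma,\|.\|),
\]
so one may take $K'(\varepsilon):=4L(\varepsilon)\diam(\Gamma,\|.\|)$, which handles both $z=2$ and $z=3$. The only nontrivial ingredient is the uniform Lipschitz bound on $d$ over pairs with difference in $\mathfrak{T}_\varepsilon$, which is exactly the content of Theorem \ref{T2a}; without this one would not be able to replace the translated time-separation by $d(x,x+v)$ at uniform cost. Everything else is the elementary super-additivity of $d$ along causal chains plus the $\Gamma$-invariance of $\overline{g}$.
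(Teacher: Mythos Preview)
Your proof is correct and follows essentially the same approach as the paper: apply the reverse triangle inequality to the causal chain $x,x+v,\ldots,x+zv$, use $\Gamma$-invariance to translate each summand $d(x+jv,x+(j+1)v)$ to a pair based near $x$, and then invoke the Lipschitz bound of Theorem~\ref{T2a} on pairs with difference $v\in\mathfrak{T}_\varepsilon$ to compare with $d(x,x+v)$ at cost $O(L(\varepsilon)\diam(\Gamma,\|.\|))$. The paper's version is slightly terser (it translates only the non-initial summand and writes out $z=2$ explicitly), but the content is identical.
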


\begin{proof}
Recall the definition of $\diam(\Gamma,\|.\|)$.
Choose $l\in \Gamma$ with $x+l\in B_{\diam(\Gamma,\|.\|)}(x+v)$. Using theorem \ref{T2a} we have
\begin{align*}
d(x,x+2v)&\ge d(x,x+v)+d(x+v,x+2v)\\
&\ge d(x,x+v)+d(x+l,x+l+v)-2L(\e)\|v-l\|\\
&=2d(x,x+v)-2L(\e)\diam(\Gamma,\|.\|).
\end{align*}
The other case follows analogously.
\end{proof}

\begin{lemma}[\cite{bu}, lemma 2]\label{L10--}
Let $\overline{\gamma}\colon [a,b]\to V$ be a continuous curve. Then there exist $k\le [m/2]$ and $k$-many 
mutually disjoint closed subintervals $[s_i,t_i]_{1\le i\le k}\subseteq [a,b]$ with
$$\sum_{i=1}^k [\overline{\gamma}(t_i)-\overline{\gamma}(s_i)]
=\frac{1}{2}[\overline{\gamma}(b)-\overline{\gamma}(a)].$$
\end{lemma}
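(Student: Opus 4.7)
The plan is to recognize this as a specialization of the Hobby--Rice theorem, which is classically established via Borsuk--Ulam. Normalize the parameter interval to $[0,1]$ and write $w := \overline{\gamma}(b) - \overline{\gamma}(a)$. The aim is to split $w$ into two equal halves, each realized as the vector sum of displacements over a family of mutually disjoint closed subintervals, and then keep the smaller of the two families.

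First, I would parametrize signed decompositions of $[0,1]$ by the unit sphere $S^m \subset \R^{m+1}$. For $x = (x_0,\ldots,x_m) \in S^m$ set cut points $u_i(x) := \sum_{j=0}^{i} x_j^2$, so that $0 = u_{-1}(x) \le u_0(x) \le \cdots \le u_m(x) = 1$, and assign to the interval $I_j(x) := [u_{j-1}(x), u_j(x)]$ the sign $\operatorname{sgn}(x_j) \in \{-1,0,+1\}$. Define
$$F(x) \;:=\; \sum_{j=0}^{m} \operatorname{sgn}(x_j)\,\bigl[\overline{\gamma}(u_j(x)) - \overline{\gamma}(u_{j-1}(x))\bigr] \;\in\; V.$$
Negating $x$ leaves the cut points unchanged but flips every sign, so $F\colon S^m \to V$ is continuous and odd. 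Since $\dim V = m$, the Borsuk--Ulam theorem yields $x^\ast \in S^m$ with $F(x^\ast) = 0$. Splitting the intervals $I_j(x^\ast)$ into the two families according to whether $\operatorname{sgn}(x_j^\ast)$ is $+1$ or $-1$, the corresponding total vector displacements are equal; since they sum to $w$, each equals $w/2$.

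Finally, I would bound the number of intervals. Among the $m+1$ slots, the signs $+1$ and $-1$ together occupy at most $m+1$ of them, so the minority sign appears in at most $\lfloor (m+1)/2 \rfloor$ slots. Selecting the minority family, discarding the degenerate (zero-length) intervals where $x_j^\ast = 0$, and, if necessary, merging adjacent closed intervals of the same sign that become touching after a degenerate interval of the opposite sign is removed, produces the desired collection of at most $[m/2]$ mutually disjoint closed subintervals $[s_i, t_i]$ with $\sum_i [\overline{\gamma}(t_i) - \overline{\gamma}(s_i)] = w/2$.

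The main obstacle I expect is the final counting step. The raw Borsuk--Ulam output gives $\lceil (m+1)/2 \rceil$ intervals on the more populous side, so pushing this down to exactly $[m/2]$ relies on exploiting zero-length intervals and the symmetric freedom of swapping the roles of $+$ and $-$. A subsidiary technical point is to check that the sphere parametrization of the cut points keeps $F$ continuous on all of $S^m$, including configurations where several coordinates $x_j$ vanish simultaneously; this is automatic from the squared-coordinate formula for $u_i(x)$, since the corresponding summands in $F(x)$ then vanish by the $\operatorname{sgn}$ factor.
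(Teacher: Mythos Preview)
The paper does not prove this lemma at all; it simply cites it as Lemma~2 of Burago~\cite{bu}. Your Borsuk--Ulam argument is exactly the standard proof of that result (equivalently, of the Hobby--Rice theorem), so in that sense your approach matches what the paper relies on.

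Your worry about the counting step is justified, and your proposed fix does not close the gap. The Borsuk--Ulam zero gives $m+1$ signed subintervals; after taking the minority sign and merging, you obtain at most $\lfloor (m+1)/2\rfloor=\lceil m/2\rceil$ intervals. For odd $m$ this is $(m+1)/2$, not $\lfloor m/2\rfloor$. Your merging trick only helps when a degenerate interval sits between two intervals of the \emph{same} sign, but nothing prevents the zero $x^\ast$ from having all $m+1$ coordinates nonzero with strictly alternating signs; in that configuration no merging is possible and both families have exactly $(m+1)/2$ members. So the sharp bound your argument produces is $k\le\lceil m/2\rceil$, which is also the bound in Burago's original formulation.

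This suggests the paper's $[m/2]$ is either a typo for $\lceil m/2\rceil$ or uses $[\,\cdot\,]$ in a nonstandard sense. It does not matter for the sequel: the only place the lemma is invoked is the proof of Lemma~\ref{L10-}, where the sole role of the bound on $k$ is to control the number of jump points by a constant depending only on $m$; replacing $m$ by $m+1$ in the final constants there changes nothing structurally. So your proof is correct for the result actually needed, and the discrepancy lies in the statement, not in your argument.
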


\begin{lemma}\label{L10-}
For all $\varepsilon >0$ there exists a $K''=K''(\varepsilon)<\infty$ such that
$$2d(x,x+v)\ge d(x,x+2v)-K''$$
for all $x,v\in V$ with $v\in \mathfrak{T}_\varepsilon$.
\end{lemma}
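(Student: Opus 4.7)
The proof adapts Burago's Riemannian argument to the Lorentzian setting, with Lemma \ref{L10--} as the central combinatorial tool. The plan is to take a near-maximal causal curve from $x$ to $x+2v$, split it into two groups of sub-arcs with equal total displacement $v$ (as guaranteed by Lemma \ref{L10--}), and bound each group's total $L^g$-length by $d(x,x+v)$ up to an additive constant. Summing then yields $d(x,x+2v)\le 2d(x,x+v)+K''$.

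Given $v\in\T_\e$, first select a future directed causal curve $\overline{\gamma}\colon[0,T]\to V$ from $x$ to $x+2v$ with $L^g(\overline{\gamma})\ge d(x,x+2v)-1$. Lemma \ref{L10--} produces $k\le[m/2]$ pairwise disjoint closed subintervals $[s_i,t_i]\subseteq[0,T]$ whose displacements $v_i:=\overline{\gamma}(t_i)-\overline{\gamma}(s_i)\in\T$ satisfy $\sum_i v_i=v$. Set $A:=\sum_i L^g(\overline{\gamma}|_{[s_i,t_i]})$ and $B:=L^g(\overline{\gamma})-A$. The at most $k+1$ complementary arcs are causal and their displacements lie in $\T$ and sum to $v$ as well.

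From the sub-arcs $\overline{\gamma}|_{[s_i,t_i]}$ I would build a causal curve $\eta$ from $x$ to a point of the form $x+v+E$ with $E\in\T$ and $\|E\|\le k\fil$, whose $L^g$-length is at least $A$. Proceed inductively: if $p_{i-1}$ denotes the running endpoint ($p_0=x$), use the lemma introducing $\fil$ to produce $k_i\in\Gamma$ with $\overline{\gamma}(s_i)+k_i\in p_{i-1}+\T$ and $\|\overline{\gamma}(s_i)+k_i-p_{i-1}\|\le\fil$; insert a causal bridge from $p_{i-1}$ to $\overline{\gamma}(s_i)+k_i$, then traverse the $\Gamma$-translated sub-arc $\overline{\gamma}|_{[s_i,t_i]}+k_i$, arriving at $p_i:=\overline{\gamma}(s_i)+k_i+v_i$. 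Because $\Gamma$-translation preserves $\overline{g}$, each translated sub-arc has the same $L^g$-length as the original, while the bridges contribute non-negative length. In total $L^g(\eta)\ge A$, and the final endpoint is $p_k=x+v+E$ with $E=\sum_i(\overline{\gamma}(s_i)+k_i-p_{i-1})\in\T$ satisfying $\|E\|\le k\fil$. An analogous construction on the complementary arcs produces $\eta'$ from $x$ to $x+v+E'$ with $L^g(\eta')\ge B$ and $\|E'\|\le(k+1)\fil$.

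To conclude, (\ref{E1b}) gives $\dist(v+E,\partial\T)\ge\dist(v,\partial\T)\ge\e\|v\|$, so as soon as $\|v\|\ge([m/2]+1)\fil$ one has $v+E,v+E'\in\T_{\e/2}$. In that regime Theorem \ref{T2a} yields
$$A+B\le d(x,x+v+E)+d(x,x+v+E')\le 2d(x,x+v)+L(\e/2)(\|E\|+\|E'\|),$$
while the left-hand side is at least $L^g(\overline{\gamma})\ge d(x,x+2v)-1$. Hence $d(x,x+2v)\le 2d(x,x+v)+K''(\e)$ for a suitable constant. The remaining range $\|v\|<([m/2]+1)\fil$ is handled trivially using $d(x,x+w)\le\sup f\cdot|w|_1$, after enlarging $K''$. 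The main obstacle I foresee is arranging the concatenation so that it is a genuine causal curve whose $\overline{g}$-length is at least $A$: only $\Gamma$-translates preserve length, so consecutive translated sub-arcs fail to match endpoints and must be linked by causal bridges provided by the $\fil$-lemma, which introduces the accumulated shift $E$ that one then has to compare with $\|v\|$ to stay in the Lipschitz range of Theorem \ref{T2a}.
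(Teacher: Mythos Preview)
Your proof is correct and follows essentially the same route as the paper: apply Lemma~\ref{L10--} to bisect the displacement of a (near-)maximal curve from $x$ to $x+2v$, reassemble each family of sub-arcs via $\Gamma$-translations joined by short causal bridges supplied by the $\fil$-lemma, and then use the Lipschitz continuity of $d$ on $\T_{\e/2}$ (Theorem~\ref{T2a}) to absorb the accumulated error $E$, with the small-$\|v\|$ regime folded into the constant. The only cosmetic differences are that the paper starts from an exactly maximal curve (available by global hyperbolicity of $(V,\overline g)$) rather than a $1$-almost maximal one, and phrases the reassembly via starred $\Gamma$-orbit points $\overline\gamma(s_i)^\ast,\overline\gamma(t_i)^\ast$ instead of your explicit inductive bridging; the resulting constants differ only in bookkeeping.
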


\begin{proof}
The idea is to build an almost maximal curve between $x$ and a point near $x+v$ from 
pieces of a maximal curve between $x$ and $x+2v$. By increasing $K''$, we can assume 
$\|v\| \ge (2m+1)\fil$. Let $\overline{\gamma}\colon [a,b]\to V$ be a maximal curve from $x$ to $x+2v$. 
Choose intervals $[s_i,t_i]\subset [a,b]$ as in lemma \ref{L10--}. Next choose points 
$\overline{\gamma}(s_i)^\ast$ and $\overline{\gamma}(t_i)^\ast$ in the $\Gamma$-orbit of $x$ such that 
$\overline{\gamma}(s_i)^\ast\in (\overline{\gamma}(s_i)+\T)\cap B_{\fil}(\overline{\gamma}(s_i))$ and 
$\overline{\gamma}(t_i)^\ast\in (\overline{\gamma}(t_i)+\T)\cap B_{\fil}(\overline{\gamma}(t_i))$. 
Applying the transformations in $\Gamma$ that map $\overline{\gamma}(s_i)^\ast$ to 
$\overline{\gamma}(t_{i-1})^\ast$, resp. $\overline{\gamma}(s_1)^\ast$ to $x$, yields:
$$d(x,x+v+\sum [\overline{\gamma}(s_i)^\ast -\overline{\gamma}(s_i)]+[\overline{\gamma}(t_i)^\ast 
-\overline{\gamma}(t_i)])\ge \sum d(\overline{\gamma}(s_i),\overline{\gamma}(t_i))$$
We have $v+\sum [\overline{\gamma}(s_i)^\ast -\overline{\gamma}(s_i)]+[\overline{\gamma}(t_i)^\ast 
-\overline{\gamma}(t_i)]\in \T_{\frac{\e}{2}}$ since we have assumed $\|v\| \ge (2m+1)\fil$.
With the Lipschitz continuity of the time separation on $\T_{\frac{\e}{2}}$ we get
\begin{align*}
d(x,x&+v+\sum [\overline{\gamma}(s_i)^\ast -\overline{\gamma}(s_i)]+[\overline{\gamma}(t_i)^\ast 
-\overline{\gamma}(t_i)])\\
&\le d(x,x+v)+L(\e/2)\sum \dist(\overline{\gamma}(s_i),\overline{\gamma}(s_i)^\ast)
+\dist(\overline{\gamma}(t_i),\overline{\gamma}(t_i)^\ast)\\
&\le d(x,x+v)+mL(\e/2)\fil.
\end{align*}
Since we can repeat the argument with $\overline{\gamma}|_{[s_i,t_i]}$ replaced by 
$\overline{\gamma}|_{[t_{i-1},s_i]}$ we get after summing of the results:
\begin{align*}
d(x,x+2v)&=\sum  d(\overline{\gamma}(s_i),\overline{\gamma}(t_i))+d(\overline{\gamma}(t_{i-1}),\overline{\gamma}(s_i)) \\
&\le 2d(x,x+v)+2mL(\e/2)\fil
\end{align*}
\end{proof}

\begin{proof}[Proof of proposition \ref{P10}]
Lemma \ref{L10-} and \ref{L10-3} ensure that lemma \ref{L10-4} can be applied to $f_x(t):=d(x,x+tv)$. Then there 
exists an $a_x(v)$ with $|d(x,x+tv)-a_x(v)t|\le 2\max\{K',K''\}=:K$. In fact $a_x(v)$ does not depend on $x$ since 
for $x'\in V$ we have
$$\left|\frac{d(x',x'+nv)}{n}-a_x(v)\right| \le \frac{K+2L(\e)\fil}{n}.$$
This shows the independence of $a_x(v)$ of $x$ as well as the uniform convergence on compact subsets of $\T_\e$ 
of $\frac{1}{n}d(x,x+nv)$ to $a_x(v)$. Set $\mathfrak{l}(v):=a_x(v)$. 
The estimate (1) then follows from the definition. Property (2) follows directly from the estimate:
$$\inf f |v|_1\le d(x,x+v)\le \sup f |v|_1$$
The inverse triangle inequality follows readily from the inverse triangle inequality for the time separation. 
For the positive homogeneity note that by the uniform convergence on compact subsets of $\T_\e$ it suffices to consider 
rational factors $\eta=\frac{p}{q}$. Then by considering subsequences we get
\begin{align*}
\mathfrak{l}(\eta v)&=\lim \frac{1}{n}d(x,x+\frac{p}{q}nv)=\lim \frac{1}{qn}d(x,x+pnv)\\
&=\lim \frac{p}{qn}d(x,x+nv)=\frac{p}{q}\lim \frac{1}{n}d(x,x+nv)=\eta \mathfrak{l}(v).
\end{align*}
\end{proof}

\section{The Rotation Vector}\label{S5}

Define the rotation vector of a future pointing curve $\gamma\colon [a,b]\to V/\Gamma$: 
$$\rho(\gamma):=\frac{1}{\mathfrak{l}(\gamma(b)-\gamma(a))}[\gamma(b)-\gamma(a)]$$

\begin{theorem}\label{T10}
Let $\e>0$ and $\gamma\colon \R\to V/\Gamma$ be a homologically maximizing geodesic with $\dot{\gamma}(t_0)\in  \T_\e$ 
for some $t_0\in \R$. Then there exists a support function $\alpha$ of $\mathfrak{l}$ such 
that for all neighborhoods $U$ of $\alpha^{-1}(1)\cap \mathfrak{l}^{-1}(1)$ there exists a $K=K(\e,U)>0$ such that for 
all $s<t\in \R$ with $\|\gamma(t)-\gamma(s)\|\ge K$, we have 
$$\rho(\gamma|_{[s,t]})\in U.$$
\end{theorem}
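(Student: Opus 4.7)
The plan is to extract from $\gamma$ a compact set $A$ of accumulation points of rotation vectors, to show that $\mathfrak{l}$ is affine on $\conv(A)$, and then to let $\alpha$ be a linear functional that realizes $\mathfrak{l}$ on $\conv(A)$ and dominates $\mathfrak{l}$ elsewhere on $\T$. The uniform estimate on neighborhoods of $\alpha^{-1}(1)\cap\mathfrak{l}^{-1}(1)$ then falls out of a routine compactness argument.

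First I would apply Theorem \ref{T2} to obtain $\delta>0$ with $\dot\gamma(t)\in\T_\delta$ for every $t\in\R$. Integrating (using the convexity of $\T_\delta$ from Corollary \ref{C0}) together with the estimate $\mathfrak{l}(v)\ge \inf f\cdot|v|_1$ from Proposition \ref{P10}(2) yields uniform constants $\delta'>0$ and $c_0>0$ such that $\gamma(t)-\gamma(s)\in\T_{\delta'}$ and $\mathfrak{l}(\gamma(t)-\gamma(s))\ge c_0\|\gamma(t)-\gamma(s)\|$ for all $s<t$. Consequently every rotation vector $\rho(\gamma|_{[s,t]})$ lies in the compact set $\T_{\delta'}\cap\mathfrak{l}^{-1}(1)$, and
\[
A:=\{v\in V\mid \exists\, s_n<t_n,\ \|\gamma(t_n)-\gamma(s_n)\|\to\infty,\ \rho(\gamma|_{[s_n,t_n]})\to v\}
\]
is a nonempty compact subset of $\mathfrak{l}^{-1}(1)\cap\T_{\delta'}$.

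The heart of the argument is to prove $\mathfrak{l}\equiv 1$ on $\conv(A)$. Because $\gamma$ is homologically maximizing, for any $s<u<t$ the time separation is additive, and combining this with Proposition \ref{P10}(1) gives
\[
|\mathfrak{l}(\gamma(t)-\gamma(s))-\mathfrak{l}(\gamma(u)-\gamma(s))-\mathfrak{l}(\gamma(t)-\gamma(u))|\le 3K(\delta'),
\]
so $\mathfrak{l}$ is additive up to bounded error along subsegments of $\gamma$. Given $\rho_1,\rho_2\in A$ and $\lambda\in(0,1)$, I would find, by a diagonal procedure, two consecutive subsegments $[s_n,u_n],[u_n,t_n]\subset\R$ whose rotation vectors approach $\rho_1,\rho_2$ while their $\mathfrak{l}$-lengths $L_1^{(n)},L_2^{(n)}$ tend to $\infty$ with ratio $L_1^{(n)}/(L_1^{(n)}+L_2^{(n)})\to\lambda$. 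The additivity above then shows $\rho(\gamma|_{[s_n,t_n]})\to\lambda\rho_1+(1-\lambda)\rho_2$; since every rotation vector has $\mathfrak{l}$-value $1$ and $\mathfrak{l}$ is continuous on $\T_{\delta'}$, we get $\mathfrak{l}(\lambda\rho_1+(1-\lambda)\rho_2)=1$. Positive homogeneity then upgrades this affineness on $\conv(A)$ to linearity of $\mathfrak{l}$ on the convex cone generated by $A$.

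Having $\mathfrak{l}$ linear on this cone, a Hahn--Banach extension produces a linear $\alpha\colon V\to\R$ with $\alpha\ge\mathfrak{l}$ on $\T$ and $\alpha=\mathfrak{l}$ on $\conv(A)$; thus $\alpha$ is a support function of $\mathfrak{l}$ and $A\subseteq\alpha^{-1}(1)\cap\mathfrak{l}^{-1}(1)$. For the uniform bound, suppose some neighborhood $U\supseteq\alpha^{-1}(1)\cap\mathfrak{l}^{-1}(1)$ fails the conclusion; then a sequence $s_n<t_n$ with $\|\gamma(t_n)-\gamma(s_n)\|\to\infty$ and $\rho(\gamma|_{[s_n,t_n]})\notin U$ exists, and compactness of $\T_{\delta'}\cap\mathfrak{l}^{-1}(1)$ yields a subsequential limit in $A\subseteq U$, contradicting openness of $U$. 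The main obstacle is the interleaving construction used to prove affineness of $\mathfrak{l}$ on $\conv(A)$: one must coordinate subsegments realizing $\rho_1$ and $\rho_2$ so that they sit consecutively along $\gamma$ with prescribed $\mathfrak{l}$-length ratio, and control the concatenation error $O(K(\delta'))$ against the growing normalization. Two sequences for $\rho_1$ and $\rho_2$ whose natural placements do not line up force either a sliding argument exploiting continuity of $\rho$ in its endpoints, or absorption of a bounded-length gap into the additive error.
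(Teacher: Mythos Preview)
Your overall strategy---form the accumulation set $A$ of rotation vectors of long subarcs, show $\mathfrak{l}\equiv 1$ on $\conv(A)$, then produce $\alpha$ by Hahn--Banach---is sound in outline and genuinely different from the paper's route. The paper never isolates $A$; it works instead with the nested dyadic families $W_n=\{\rho(\gamma|_{[i2^n,(i+1)2^n]}):i\in\Z\}$, proves via a cut-and-paste estimate (Lemma~\ref{L10}, relying on the surgery in the proof of Proposition~\ref{P1}) that every $(k{+}1)$-tuple in $W_n$ with $k\le m$ satisfies $\mathfrak{l}(\sum v_i)\le k{+}1+\delta_n$ with $\delta_n\downarrow 0$, and then extracts a limiting supporting hyperplane from hyperplanes separating $\conv(W_n)$ from $\mathfrak{l}^{-1}([1+2\delta_n,\infty))$ (Lemma~\ref{L11}). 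The dyadic partition makes the subarcs disjoint by construction, so the problem of ``lining up'' realizing intervals never arises.

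The step you flag as the main obstacle is a genuine gap, and neither proposed fix closes it. For arbitrary $\rho_1,\rho_2\in A$ there is no reason to expect \emph{consecutive} realizing subarcs: if $\gamma$ cycles through directions $\rho_1,\rho_2,\rho_3$ on blocks of geometrically growing length, then $\rho_1$ and $\rho_3$ are only ever realized on subarcs separated by a $\rho_2$-block of comparable, hence unbounded, length---so the gap cannot be ``absorbed''. The sliding argument only says $T\mapsto\rho(\gamma|_{[u,u+T]})$ moves continuously in $\mathfrak{l}^{-1}(1)$, with no control over which points it visits. What actually works is to drop consecutiveness: for disjoint subarcs with increments $v_1,v_3$ separated by a middle increment $v_2$, combine your additivity $\mathfrak{l}(v_1)+\mathfrak{l}(v_2)+\mathfrak{l}(v_3)=\mathfrak{l}(v_1+v_2+v_3)+O(1)$ with the reverse triangle inequality $\mathfrak{l}(v_1+v_2+v_3)\ge\mathfrak{l}(v_1+v_3)+\mathfrak{l}(v_2)$ to get $\mathfrak{l}(v_1+v_3)\le\mathfrak{l}(v_1)+\mathfrak{l}(v_3)+O(1)$, regardless of the length of $v_2$. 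But then $\lambda\rho_1+(1-\lambda)\rho_2$ is no longer exhibited as the rotation vector of a single subarc, so you cannot conclude $A$ is convex from the pairwise statement alone; and pairwise affineness of a concave function does \emph{not} propagate to the full convex hull (take $1+\min(x,y,z)$ on the standard simplex). You must therefore run the estimate for up to $m{+}1$ points simultaneously and still arrange mutually disjoint realizing intervals---which is precisely what Lemma~\ref{L10} does, with cut-and-paste in place of the reverse-triangle trick.
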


\begin{lemma}\label{L10}
Let $\e,\delta>0$, $F,G<\infty$ and $n\in \N$ be given. Then there exists a $K=K(\e,\delta, F,G,n)<\infty$ such that 
for all $k\le n$, all $T\ge K$ and all $F$-almost maximal $(G,\e)$-timelike curves $\gamma\colon \R\to V$, the 
following holds.

Given $k$ many intervals $[t_i,t_i+\sigma_i]$ with disjoint interiors and $L^g(\gamma|_{[t_i,t_i+\sigma_i]})$ $=T$ 
we have
$$\mathfrak{l}(\sum \rho(\gamma|_{[t_i,t_i+\sigma_i]}))\le k+1+\delta.$$
\end{lemma}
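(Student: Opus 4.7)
The plan is to reduce the upper bound on $\mathfrak{l}(\sum \rho_i)$ to one on $\mathfrak{l}(\sum v_i)$ via a monotonicity argument, and to bound $\mathfrak{l}(\sum v_i)$ by combining $F$-almost maximality of the total arc $\gamma|_{[t_1,t_k+\sigma_k]}$ with a good lower estimate for the ``gap'' contribution. Order the intervals so that $t_1 \le \cdots \le t_k$; disjoint interiors give $t_i+\sigma_i \le t_{i+1}$. Fix a lift $\overline\gamma\colon\R\to V$ and set $v_i := \overline\gamma(t_i+\sigma_i)-\overline\gamma(t_i)$, $\mathfrak{l}_i := \mathfrak{l}(v_i)$, $\rho_i := v_i/\mathfrak{l}_i$, $u_i := \overline\gamma(t_{i+1})-\overline\gamma(t_i+\sigma_i)\in\T$, $L_i := L^g(\gamma|_{[t_i+\sigma_i,t_{i+1}]})$, and $W := \overline\gamma(t_k+\sigma_k)-\overline\gamma(t_1) = \sum v_i + \sum u_i$, so that $\gamma|_{[t_1,t_k+\sigma_k]}$ is an $F$-almost maximal arc of $g$-length $kT+\sum L_i$.

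The general bound $L^g(\gamma|_{[s,t]})\le\sup f\cdot|\gamma(t)-\gamma(s)|_1 \le \sup f\cdot C_1\|\gamma(t)-\gamma(s)\|$ (with $|v|_1\le C_1\|v\|$ a universal equivalence between norms) applied to each piece yields $\|v_i\|\ge T/(\sup f\,C_1)$, and two applications of (\ref{E1c}) give $\|W\|\ge \eta^2 T/(\sup f\,C_1)$. Hence for $T$ above an initial threshold depending only on $\e,F$, Proposition \ref{P1} places both $v_i$ and $W$ in the uniform cone $\T_{\delta_0}$ with $\delta_0=\delta_0(\e,F)>0$; Corollary \ref{C0} then ensures $\sum v_i$ and $\sum\rho_i$ lie in $\T_{\delta_0}$ as well. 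Writing $K_0:=K(\delta_0)$ for the constant of Proposition \ref{P10}(1), the almost maximality inequality $T\le d(\overline\gamma(t_i),\overline\gamma(t_i+\sigma_i))\le T+F$ gives $\mathfrak{l}_i\in[T-K_0,\,T+F+K_0]$, and applying Proposition \ref{P10}(1) to $W$ yields
$$\mathfrak{l}(W) \le d(\overline\gamma(t_1),\overline\gamma(t_k+\sigma_k)) + K_0 \le kT + \textstyle\sum L_i + F + K_0.$$
The crucial lower bound on $\mathfrak{l}(\sum u_i)$ is obtained gap by gap. If $\|u_i\|\ge K(\e,F)$ then Proposition \ref{P1} places $u_i\in\T_{\delta_0}$, so Proposition \ref{P10}(1) together with $d\ge L_i$ gives $\mathfrak{l}(u_i)\ge L_i-K_0$; otherwise $L_i \le \sup f\,C_1\|u_i\| < \sup f\,C_1 K(\e,F) =: K_u$, so $\mathfrak{l}(u_i)\ge 0\ge L_i-K_u$. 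Setting $K_2:=\max\{K_0,K_u\}$, super-additivity produces
$$\mathfrak{l}\bigl(\textstyle\sum u_i\bigr) \ge \textstyle\sum \mathfrak{l}(u_i) \ge \textstyle\sum L_i - (n-1)K_2.$$

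Subtracting via $\mathfrak{l}(\sum v_i)+\mathfrak{l}(\sum u_i)\le\mathfrak{l}(W)$ yields $\mathfrak{l}(\sum v_i)\le kT+F+K_0+(n-1)K_2 =: kT+C_n$. Since $\mathfrak{l}_i\ge T-K_0$, each scalar $\tfrac{1}{T-K_0}-\tfrac{1}{\mathfrak{l}_i}$ is non-negative, so $\sum v_i/(T-K_0)-\sum\rho_i$ is a sum of non-negative scalars times cone vectors and lies in $\T$; super-additivity then gives the monotonicity $\mathfrak{l}(A)\ge\mathfrak{l}(B)$ whenever $A,B,A-B\in\T$, hence
$$\mathfrak{l}\bigl(\textstyle\sum\rho_i\bigr) \le \mathfrak{l}\bigl(\textstyle\sum v_i/(T-K_0)\bigr) = \frac{\mathfrak{l}(\sum v_i)}{T-K_0} \le k + \frac{nK_0+C_n}{T-K_0}.$$
Choosing $K=K(\e,\delta,F,G,n)$ so large that the remainder term is $\le\delta$ for all $T\ge K$ and all $k\le n$ concludes the proof (in fact with the sharper bound $k+\delta$, so the $+1$ in the lemma is slack). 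The hardest step is the gap estimate $\mathfrak{l}(\sum u_i)\ge\sum L_i-O_n(1)$: the naive bound $\mathfrak{l}(u_i)\ge(\inf f/\sup f)L_i$ loses too much when $\sum L_i$ is large compared to $T$, and the way out is precisely that any gap with non-negligible $L_i$ forces $\|u_i\|$ large, so that Proposition \ref{P1} automatically puts $u_i$ in $\T_{\delta_0}$ and Proposition \ref{P10}(1) converts the length bound into the needed $\mathfrak{l}$-estimate.
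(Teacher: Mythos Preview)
Your proof is correct and shares with the paper the key monotonicity step (passing from $\sum\rho_i$ to $\sum v_i/(T-K_0)$ via super-additivity), but the two arguments diverge in how they control the gap contribution. The paper first performs a cut-and-paste surgery (as in the proof of Proposition~\ref{P1}) to reduce to the situation where all gap vectors $u_i$ have $\|u_i\|\le\fil$; then the total $g$-length of the concatenated curve is $kT+O_n(1)$ and a single application of Proposition~\ref{P10}(1) to $W$ suffices. You instead leave the gaps untouched and control them individually: either $\|u_i\|$ is large, so Proposition~\ref{P1} forces $u_i\in\T_{\delta_0}$ and Proposition~\ref{P10}(1) yields $\mathfrak{l}(u_i)\ge L_i-K_0$, or $\|u_i\|$ is small and $L_i$ itself is bounded. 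Subtracting via $\mathfrak{l}(\sum v_i)+\mathfrak{l}(\sum u_i)\le\mathfrak{l}(W)$ then cancels $\sum L_i$ cleanly. Your route avoids the surgical construction entirely, is arguably more transparent, and---as you observe---actually yields the sharper bound $k+\delta$ rather than $k+1+\delta$; the paper's extra $+1$ is an artifact of indexing. The price you pay is a second invocation of Proposition~\ref{P10}(1) on each gap, but since the number of gaps is bounded by $n$ this costs nothing.
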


\begin{proof}
Assume that the arcs $\gamma|_{[t_i,t_i+\sigma_i]}$ are indexed in increasing order. W.l.o.g. we can suppose that 
$\|\gamma(t_i+\sigma_i)-\gamma(t_{i+1})\|\le\fil$. If this is not the case we can repeat the "cut-and-paste" operation
from the proof of proposition \ref{P1}. Choose $\delta >0$ according to theorem \ref{T2}. Notice that in this case for 
any interval $[s_\infty, t_\infty]$ with $\|\gamma(t_\infty)-\gamma (s_\infty)\|= (2n-1)\frac{\fil}{\delta}$, we have
$\dist(\gamma(t_\infty)-\gamma(s_\infty),\partial\T)\ge (2n-1)\fil$. Choose one such interval disjoint from 
$[t_i,t_i+\sigma_i]$. The new curve, resulting from the cut-and-paste operation, will be 
$L^g(\gamma|_{[s_\infty,t_\infty]})-F$-almost maximal. Define $A:=\max L^g(\gamma|_{[t_i+\sigma_i,t_{i+1}]})$.

Now let $\T'$ be a proper sub-cone of $\T$, $v_i\in \T'$ ($0\le i\le k$) with $\mathfrak{l}(v_i)=1$ and 
$\mathfrak{l}(\sum v_i)= k+1+r$. Further let $\lambda_i> 0$ and $\lambda \le \min \lambda_i$. Then
$$\mathfrak{l}(\sum \lambda_i v_i)\ge \lambda \mathfrak{l}(\sum v_i)+\sum \mathfrak{l}((\lambda_i-\lambda)v_i)\ge 
\lambda (k+1+r)$$
and for $\lambda_i=\mathfrak{l}(\gamma(t_i+\sigma_i)-\gamma(t_i))$
$$\mathfrak{l}(\sum \gamma(t_i+\sigma_i)-\gamma(t_i))\ge \lambda \mathfrak{l}(\sum \rho(\gamma|_{[t_i,t_i+\sigma_i]})).$$

By proposition \ref{P10} there exists a $K<\infty$, depending only on $F,G$ and $\e$, such that 
$|\mathfrak{l}(\gamma(t)-\gamma(s)) -L^g(\gamma|_{[s,t]})|\le K$ for all $s, t \in \R$.
For $T> K$ set $\lambda := T-K$. Then we conclude
\begin{align*}
(k+1+r)(T-K)&\le \sum L^g(\gamma|_{[t_i,t_i+\sigma_i]})\\
&\le L^g(\gamma|_{[t_1,t_k+\sigma_k]})= (k+1)T +k(A+K). 
\end{align*}
Solving for $r$ shows
$$r\le \frac{1}{T-K}((2k+1)K+kA).$$
Increasing $T$ sufficiently we conclude the assertion.
\end{proof}

\begin{lemma}\label{L11}
Let $\{W_n\}_{n\in\N}$ be a sequence of subsets of $\mathfrak{l}^{-1}(1)$  such that $W_{n+1}\subset cone(W_n)$. Assume 
further that there exists a sequence $\delta_n \downarrow 0$ such that for any $k$-tuple of pairwise
different $v_i\in W_n$, $\mathfrak{l}(\sum v_i)\le k+1+\delta_n$ holds. Then there exists a supporting hyperplane $E$ of 
$\mathfrak{l}^{-1}(1)$ such that for any neighborhood $U$ of $E\cap \mathfrak{l}^{-1}(1)$ the intersection 
$cone(W_n)\cap \mathfrak{l}^{-1}(1)$ is eventually contained in $U$. 
\end{lemma}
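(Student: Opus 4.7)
The plan is to identify $E$ as the supporting hyperplane at a limit point obtained by averaging over $W_n$, and then to transfer the resulting face structure to $\mathrm{cone}(W_n)\cap \mathfrak{l}^{-1}(1)$ via Hausdorff convergence. First I observe that $W_{n+1} \subset \mathrm{cone}(W_n)$ combined with $W_{n+1} \subset \mathfrak{l}^{-1}(1)$ and the positive homogeneity of $\mathfrak{l}$ forces $W_{n+1} \subset W_n$, since $\mathrm{cone}(W_n) \cap \mathfrak{l}^{-1}(1) = W_n$ (each ray through the origin meets the level set in at most one point). Assuming $W_n \subset \T_\e$ for some uniform $\e > 0$ (as holds in the intended application), one passes to closures so that the $\overline{W_n}$ form a decreasing family of compact subsets of $\mathfrak{l}^{-1}(1)\cap \T_\e$ (compactness following from $\mathfrak{l}(v) \ge \inf f\,|v|_1$ in Proposition~\ref{P10}(2)), converging in Hausdorff distance to $W_\infty := \bigcap_n \overline{W_n}$.

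Next I use the hypothesis to locate a point of $\mathfrak{l}^{-1}(1)$ in the closed convex hull of $W_\infty$. For any $k$ distinct points $v_1,\ldots,v_k \in W_\infty$, passing $n \to \infty$ in $\mathfrak{l}(\sum v_i) \le k+1+\delta_n$ yields $\mathfrak{l}(\sum v_i) \le k+1$, so together with superadditivity the average $\bar v_k := \frac{1}{k}\sum v_i$ satisfies $1 \le \mathfrak{l}(\bar v_k) \le 1+1/k$. Letting $k \to \infty$ along growing finite subsets and extracting a subsequential limit by compactness of $\overline{\conv(W_\infty)}$ produces $\bar v^\ast \in \overline{\conv(W_\infty)}$ with $\mathfrak{l}(\bar v^\ast) = 1$. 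Since $\mathfrak{l}$ is concave and positively homogeneous, Hahn--Banach provides a linear functional $\alpha$ with $\alpha \ge \mathfrak{l}$ on $\T$ and $\alpha(\bar v^\ast) = 1$; I set $E := \alpha^{-1}(1)$.

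It remains to show $W_\infty \subset E$; the conclusion then follows because for any neighborhood $U$ of $E \cap \mathfrak{l}^{-1}(1)$ some $\rho$-neighborhood of the compact set $W_\infty$ is contained in $U$, and Hausdorff convergence gives $W_n \subset U$ eventually. Writing $\bar v^\ast$ as a limit of convex combinations $\sum t_i^{(m)} v_i^{(m)}$ with $v_i^{(m)} \in W_\infty$, the identities $1 = \alpha(\bar v^\ast)$ and $\alpha(v) \ge \mathfrak{l}(v) = 1$ for every $v \in W_\infty$ combined with the equality case in Jensen's inequality force $\alpha(v) = 1$ on every support point of the limiting combination; a density argument then extends this to the entire set $W_\infty$. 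This last step is the main obstacle: because the hypothesis only provides a $+1$ slack above the superadditivity bound, the face structure is detected only asymptotically in $k$, and care is needed to guarantee that every element of $W_\infty$ is genuinely captured by the averaging scheme rather than being overlooked as a \emph{spurious} limit point of the convex combinations representing $\bar v^\ast$.
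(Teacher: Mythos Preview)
Your approach diverges from the paper's and introduces two genuine gaps.

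First, the opening claim that $\mathrm{cone}(W_n)\cap\mathfrak{l}^{-1}(1)=W_n$ rests on reading $\mathrm{cone}(W_n)$ as the union of rays through points of $W_n$. In this paper it denotes the \emph{convex} cone generated by $W_n$: in the proof of Theorem~\ref{T10} one has $W_{n+1}\subset\mathrm{cone}(W_n)$ precisely because a displacement at scale $2^{n+1}$ is a \emph{sum} of two displacements at scale $2^n$. Hence $\mathrm{cone}(W_n)\cap\mathfrak{l}^{-1}(1)$ is the radial projection of $\conv(W_n)$ onto the level set, in general strictly larger than $W_n$. This invalidates both the nestedness $W_{n+1}\subset W_n$ you rely on and your final Hausdorff step, which only traps $W_n$ inside $U$ rather than the larger set $\mathrm{cone}(W_n)\cap\mathfrak{l}^{-1}(1)$ actually appearing in the conclusion.

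Second, even granting a limit set $W_\infty$ and a point $\bar v^\ast\in\overline{\conv(W_\infty)}\cap\mathfrak{l}^{-1}(1)$, the argument that $\alpha\equiv 1$ on $W_\infty$ does not close. From $1=\alpha(\bar v^\ast)=\lim_m\sum_i t_i^{(m)}\alpha(v_i^{(m)})$ together with $\alpha(v_i^{(m)})\ge 1$ you obtain only $\sum_i t_i^{(m)}\bigl(\alpha(v_i^{(m)})-1\bigr)\to 0$; this does not pin $\alpha$ to $1$ at any particular $v\in W_\infty$, and no density mechanism is available. What would rescue the argument is knowing $\mathfrak{l}\le 1$ on \emph{all} of $\conv(W_\infty)$, so that $\bar v^\ast$ can be taken in the relative interior; that in turn follows from the superadditivity estimate
\[
\mathfrak{l}\Bigl(\sum_i v_i\Bigr)\;\ge\;\mathfrak{l}\Bigl(\sum_i t_i v_i\Bigr)+\sum_i(1-t_i),
\]
which yields $\mathfrak{l}(\sum t_i v_i)\le 1+\delta_n$ for every convex combination of points of $W_n$ --- and this is exactly the paper's key computation, which your averaging with uniform weights and $k\to\infty$ does not replace (and which also fails outright when $W_\infty$ is finite).

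The paper sidesteps both issues by never singling out a support point. It shows via the inequality above and Carath\'eodory's theorem that $\overline{\conv(W_n)}$ is disjoint from $\mathfrak{l}^{-1}([1+2\delta_n,\infty))$, separates these two convex sets by an affine hyperplane $E_n$, and passes to a limit hyperplane $E$. Since each $E_n$ already lies on one side of all of $\conv(W_n)$, the conclusion for $\mathrm{cone}(W_n)\cap\mathfrak{l}^{-1}(1)$ is immediate, with no relative-interior or density argument needed.
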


\begin{proof}
Let $\{v_0,\ldots ,v_k\}\subset W_n$ and $t_0,\ldots ,t_k\ge 0$ with $\sum t_i =1$. Since $\mathfrak{l}(v_i)=1$ we 
conclude
\begin{align*}
k+1+\delta_n\ge \mathfrak{l}(\sum v_i)&\ge \mathfrak{l}(\sum t_i v_i)+\sum \mathfrak{l}((1-t_i)v_i)\\
&=\mathfrak{l}(\sum t_i v_i)+\sum (1-t_i)=\mathfrak{l}(\sum t_i v_i)+k.
\end{align*}
Therefore $\mathfrak{l}(\sum t_i v_i)\le 1+\delta_n$. By Caratheodory's theorem the convex hull of $W_n$ is the union 
of all simplices with vertices in $W_n$. Consequently the closure of the convex hull of $W_n$ is disjoint from 
$\mathfrak{l}^{-1}([1+2\delta_n,\infty))$. By the inverse triangle inequality, proposition \ref{P10} (ii) and  the 
assumption $W_{n+1}\subset cone(W_n)$, the sets $W_n$ are uniformly bounded. Then there exists an affine hyperplane 
$E_n$ separating the convex hull of $W_n$ from $\mathfrak{l}^{-1}([1+2\delta_n,\infty))$. Now consider a limit 
hyperplane $E$ of $\{E_n\}_{n\in\N}$. Since $\lim \delta_n =0$, $E$ is a supporting hyperplane of $\mathfrak{l}^{-1}(1)$.
The assertion now follows easily. 
\end{proof}

\begin{proof}[Proof of theorem \ref{T10}]
It suffices to prove the assertion for para\-meter intervals of the form $[i2^n,(i+1)2^n]$ for $i\in \Z$ and $n\in\N$.  
Set $W_n:=\{\rho(\gamma|_{[i2^n,(i+1)2^n]})|\, i\in \Z\}$.
By lemma \ref{L10} there exists a sequence $\delta_n\downarrow 0$ such that for all $\{v_0,\ldots ,v_k\}\subset W_n$ 
($k\le m$) holds $\mathfrak{l}(v_0+\ldots v_k)\le k+1+\delta_n$. Since any vector $\gamma((i+1)2^{n+1})-\gamma(i2^{n+1})$
is the sum of two vectors of the form $\gamma((j+1)2^{n})-\gamma(j2^{n})$, the convex cone over $W_n$ contains 
$W_{n+1}$. This establishes the assumptions of lemma \ref{L11} and therefore the assertion.
\end{proof}

Recall the definition of the dual cone $\mathfrak{T}^\ast:=\{\alpha \in V^\ast |\, \alpha|_\mathfrak{T} \ge 0\}$ of $\T$.
We call a function $h\colon V\to \mathbb{R}$ $\alpha$-equivariant, if  
$$h(x+k)=h(x)+\alpha(k)$$
for all $x\in V$ and $k\in \Gamma$.

\begin{remark}
For $\alpha\in V^\ast$ there exists an $\alpha$-equivariant time function if and only if $\alpha \in (\T^\ast)^\circ$.
Furthermore, the existence of an $\alpha$-equivariant time function is equivalent to the existence of a smooth 
$\alpha$-equivariant temporal function (a $C^1$-function is a temporal function if the Lorentzian gradient is always 
timelike and past pointing).
\end{remark}

Let $\alpha\in V^\ast$ and $h\colon V\to \R$ be an $\alpha$-equivariant $C^1$-function. Then the $1$-form $dh$ descends
to a $1$-form $\omega_h$ on $V/\Gamma$.

\begin{definition}
Let $\alpha\in (\T^\ast)^\circ$ and $\tau \colon V\to \R$ be an $\alpha$-equivariant temporal function.
\begin{enumerate}
\item Define for $\sigma \in\R$:
$$\mathfrak{h}_\tau(\sigma):=\sup\{L^{g}(\gamma)|\;\gamma\text{ future pointing, }\int_\gamma \omega_\tau=\sigma\}$$ 
\item A homologically maximizing curve $\gamma\colon I\to V/\Gamma$ is said to be $\alpha$-almost 
maximal if there exists a constant $F<\infty$ such that
$$L^{g}(\gamma|_{[s,t]})\ge \mathfrak{h}_\tau\left(\int_{\gamma|_{[s,t]}}\omega_\tau\right)-F$$
for all $s<t\in I$.
\end{enumerate}
\end{definition}

\begin{remark}

(i) Note that the definition of $\alpha$-almost maximality does not depend on the choice of $\alpha$-equivariant temporal 
function. A different choice of $\alpha$-equivariant time function only yields a different constant $F'$.

(ii) In Riemannian geometry the notion of $\alpha$-almost minimality (analogue of $\alpha$-alomost maximality) is now 
replaced by the notions of calibrations and calibrated curves (compare \cite{ba2}). The Lorentzian versions of
calibrations and calibrated curves will be introduced in \cite{diss}.
\end{remark}

Define the dual stable time separation 
$$\mathfrak{l}^\ast\colon \T^\ast\to \R,\; \alpha\mapsto \min\{\alpha(v)|\;\mathfrak{l}(v)=1\}.$$

\begin{theorem}\label{T20}
\begin{enumerate}
\item For every $\alpha\in (\T^\ast)^\circ$ there exists an $\alpha$-almost maximal timelike geodesic 
$\gamma\colon \R\to V/\Gamma$.
\item Let $\alpha\in \T^\ast$ with $\mathfrak{l}^\ast(\alpha)=1$. Then for every neighborhood $U$ of 
$\alpha^{-1}(1)\cap \mathfrak{l}^{-1}(1)$ there exists a $K=K(\alpha,U)<\infty$ such that 
$$\rho(\gamma|_{[s,t]})\in U$$
for all $\alpha$-almost maximal future pointing curves $\gamma\colon\R\to V/\Gamma$ and every $s<t\in \R$ with 
$\|\gamma(t)-\gamma(t)\|\ge K$.
\end{enumerate}
\end{theorem}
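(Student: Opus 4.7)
The plan is to treat the two parts in order, reducing first to the normalization $\mathfrak{l}^\ast(\alpha)=1$. Since $\alpha\in (\T^\ast)^\circ$ implies $\alpha\ge c\|\cdot\|$ on $\T$ for some $c>0$, while $\mathfrak{l}(v)\le \sup f\,|v|_1\le C'\|v\|$, the dual norm $\mathfrak{l}^\ast(\alpha)$ is strictly positive and we may rescale. In this normalization $\alpha\ge \mathfrak{l}$ on all of $\T$, and the infimum defining $\mathfrak{l}^\ast(\alpha)$ is attained at some interior point $v_\alpha\in\T_{\e_0}$ (the boundary is excluded because $\mathfrak{l}\to 0$ there while $\alpha\ge c\|\cdot\|$), giving $\mathfrak{l}(v_\alpha)=\alpha(v_\alpha)=1$.

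For (1), I would construct the candidate geodesic as a limit of long maximal segments along $v_\alpha$. For each $n$ pick a future pointing maximal timelike geodesic $\overline{\gamma}_n\colon [0,T_n]\to V$ from $0$ to $nv_\alpha$, project to $V/\Gamma$, and translate by elements of $\Gamma$ so that the midpoint lies in a fixed compact set. By Theorem \ref{T2} the tangent vectors lie in a uniform $\T_\delta$, so the limit curve lemma produces a geodesic $\gamma\colon \R\to V/\Gamma$ which is still homologically maximizing. The key accounting step is this: for any subinterval with endpoint difference $v_n=\overline{\gamma}_n(t_n)-\overline{\gamma}_n(s_n)$, maximality decomposes $d(0,nv_\alpha)=n+O(1)$ into three pieces with $\sum\mathfrak{l}(\cdot)=n+O(1)$; linearity of $\alpha$ gives $\sum\alpha(\cdot)=n$; combined with $\alpha\ge\mathfrak{l}$ on each piece this forces $\alpha(v_n)-\mathfrak{l}(v_n)=O(1)$ with a constant independent of $n$ and of the subinterval. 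Hence
\[
L^{g}(\gamma_n|_{[s_n,t_n]})\ge \mathfrak{l}(v_n)-K(\delta)\ge \alpha(v_n)-O(1)\ge \textstyle\int_{\gamma_n|_{[s_n,t_n]}}\omega_\tau-O(1).
\]
Since $\mathfrak{h}_\tau(\sigma)\le \sigma+O(1)$ globally (any curve with $\tau$-increment $\sigma$ has endpoint difference with $\alpha$-value $\sigma+O(1)$, hence $\mathfrak{l}\le \sigma+O(1)$, hence $L^g\le \sigma+O(1)$), we obtain $\alpha$-almost maximality of each $\gamma_n$ with a uniform constant $F$, and this property passes to $\gamma$ in the limit.

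For (2), let $\gamma$ be $\alpha$-almost maximal with constant $F$ and consider a subarc with $v=\gamma(t)-\gamma(s)$ having $\|v\|$ large. The same bound as in part (1), together with the lower bound $\mathfrak{h}_\tau(\sigma)\ge \sigma-O(1)$ obtained by applying a long maximal geodesic along $v_\alpha$, yields $L^{g}(\gamma|_{[s,t]})\ge \alpha(v)-C_1$. From $L^{g}\le d\le \sup f\,|v|_1$ this forces $|v|_1\gtrsim \alpha(v)\gtrsim \|v\|$, which by \eqref{E1} places $v\in\T_\e$ for a uniform $\e>0$ once $\|v\|$ exceeds a threshold. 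Proposition \ref{P10}(1) then gives $L^{g}\le \mathfrak{l}(v)+K(\e)$, and combined with the lower bound
\[
0\le \alpha(v)-\mathfrak{l}(v)\le C_1+K(\e).
\]
Dividing by $\mathfrak{l}(v)\ge \inf f\,|v|_1\to\infty$, the rotation vector $\rho(\gamma|_{[s,t]})=v/\mathfrak{l}(v)$ satisfies $\mathfrak{l}(\rho)=1$ and $\alpha(\rho)\to 1$; since $\rho$ stays in the compact set $\T_\e\cap\mathfrak{l}^{-1}(1)\cap \{\alpha\le 1+1\}$, it must accumulate on $\alpha^{-1}(1)\cap\mathfrak{l}^{-1}(1)$.

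The main obstacle is the bookkeeping at the heart of part (1): one must verify that every middle piece of a maximal segment from $0$ to $nv_\alpha$ inherits the almost-equality $\alpha\approx\mathfrak{l}$ with a constant independent of the segment and its location, since otherwise the defect $L^{g}-\mathfrak{h}_\tau(\int\omega_\tau)$ could blow up under the limit curve procedure. Handling the case $\alpha\in\partial\T^\ast$ in part (2) requires a separate argument or the observation that $\alpha^{-1}(1)\cap\mathfrak{l}^{-1}(1)$ is then either empty or unbounded in ways that reduce the assertion to the interior case after further normalization.
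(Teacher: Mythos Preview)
Your argument is essentially correct, and for part (2) it follows the same line as the paper (derive $L^g(\gamma|_{[s,t]})\approx\alpha(v)$, force $v\in\T_\e$, then use $L^g\le\mathfrak{l}(v)+K(\e)\le\alpha(v)+K(\e)$ to squeeze $\alpha(\rho)\to 1$). Part (1), however, takes a genuinely different route. The paper constructs $\gamma_n$ as a \emph{realizer} of $\mathfrak{h}_\tau(2n)$, i.e.\ a homologically maximizing pregeodesic with $\int_{\gamma_n}\omega_\tau=2n$ and $L^g(\gamma_n)=\mathfrak{h}_\tau(2n)$; $\alpha$-almost maximality on subarcs then drops out of the near-subadditivity of $\mathfrak{h}_\tau$ (Lemma~\ref{L20}(ii)) in one line, since splitting $[-T_n,T_n]$ into three pieces gives $\mathfrak{h}_\tau(2n)\ge\sum\mathfrak{h}_\tau(\text{pieces})-2I$. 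You instead take maximal segments in the fixed direction $v_\alpha$ and exploit the duality inequality $\alpha\ge\mathfrak{l}$ together with the additivity of $\alpha$ to telescope the defect $\alpha-\mathfrak{l}$ over three subarcs. Your route is more elementary in that it works directly with $\mathfrak{l}$ and does not need the structural Lemma~\ref{L20}(ii); the paper's route is cleaner in that the bookkeeping is absorbed into a single statement about $\mathfrak{h}_\tau$.

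Two small points. First, your global bound $\mathfrak{h}_\tau(\sigma)\le\sigma+O(1)$ needs a short case split you did not spell out: if the endpoint displacement $v$ lies in $\T_\e$ then indeed $L^g\le d\le\mathfrak{l}(v)+K(\e)\le\alpha(v)+K(\e)$, but if $v\notin\T_\e$ one must instead use $L^g\le\sup f\,|v|_1\le\sup f\sqrt{C\e}\,\|v\|<c\|v\|\le\alpha(v)$ for $\e$ chosen small enough. Second, your concern about $\alpha\in\partial\T^\ast$ in part (2) is moot: if $\alpha$ annihilates some lightlike $v_0$, then along directions approaching $v_0$ on $\mathfrak{l}^{-1}(1)$ one has $\alpha\to 0$, so $\mathfrak{l}^\ast(\alpha)=0$; the hypothesis $\mathfrak{l}^\ast(\alpha)=1$ already forces $\alpha\in(\T^\ast)^\circ$.
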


\begin{corollary}
$(V/\Gamma,g)$ contains infinitely many geometrically distinct homologically maximizing timelike geodesics 
$\gamma\colon \R\to V/\Gamma$ with the additional property that the limit
$$\lim_{t\to\infty}\rho(\gamma|_{[s,s+t]})=:v$$
exists uniformly in $s\in \R$. The $v$ are exposed points of $\mathfrak{l}^{-1}(1)$.
\end{corollary}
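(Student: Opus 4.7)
The plan is to produce, for each exposed point $v$ of $\mathfrak{l}^{-1}(1)$, an $\alpha$-almost maximal timelike geodesic via theorem \ref{T20} whose subarc rotation vectors converge uniformly to $v$, and then to invoke convex geometry to exhibit infinitely many such points $v$.

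Fix an exposed point $v \in \mathfrak{l}^{-1}(1)$ and choose $\alpha \in \T^\ast$ with $\mathfrak{l}^\ast(\alpha) = 1$ and $\alpha^{-1}(1) \cap \mathfrak{l}^{-1}(1) = \{v\}$. For any $u \in \T \setminus \{0\}$ the point $u/\mathfrak{l}(u)$ lies in $\mathfrak{l}^{-1}(1)$, so $\alpha(u)/\mathfrak{l}(u) = \alpha(u/\mathfrak{l}(u)) \ge \mathfrak{l}^\ast(\alpha) = 1$, giving $\alpha(u) \ge \mathfrak{l}(u) \ge \inf f \cdot |u|_1 > 0$ by proposition \ref{P10}(2). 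Hence $\alpha \in (\T^\ast)^\circ$, and theorem \ref{T20}(1) yields an $\alpha$-almost maximal future pointing timelike geodesic $\gamma_v \colon \R \to V/\Gamma$.

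Next I apply theorem \ref{T20}(2) with $U$ ranging over a neighborhood basis of the singleton $\{v\} = \alpha^{-1}(1) \cap \mathfrak{l}^{-1}(1)$: each such $U$ admits $K = K(\alpha, U)$ with $\rho(\gamma_v|_{[s,t]}) \in U$ whenever $\|\gamma_v(t) - \gamma_v(s)\| \ge K$. Since $\gamma_v$ is a homologically maximizing timelike geodesic, theorem \ref{T2} furnishes $\delta > 0$ with $\dot\gamma_v \in \T_\delta$ throughout, which together with the boundedness of $f$ forces $\|\gamma_v(s+t) - \gamma_v(s)\|$ to grow linearly in $t$ uniformly in $s$. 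Consequently $\rho(\gamma_v|_{[s,s+t]}) \to v$ uniformly in $s$ as $t \to \infty$. Geometric distinctness of the family $\{\gamma_v\}$ is then immediate: two geodesics sharing the same image in $V/\Gamma$ admit matched future pointing parametrizations inducing identical rotation vectors on corresponding subarcs, so their asymptotic limits must coincide.

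The main obstacle is to show that $\mathfrak{l}^{-1}(1)$ carries infinitely many exposed points. Proposition \ref{P10}(3) makes $\mathfrak{l}^{-1}([1,\infty))$ a convex body whose extreme points all lie on the $(m-1)$-dimensional boundary hypersurface $\mathfrak{l}^{-1}(1)$; Straszewicz's theorem yields density of exposed points in the extreme set, so it suffices to establish infinitude of the latter. This follows via the convex-geometric argument of the Riemannian stable-norm analogue treated in \cite{ba}, transferring to the present Lorentzian setting through the two-sided pinching $\inf f \cdot |v|_1 \le \mathfrak{l}(v) \le \sup f \cdot |v|_1$ of proposition \ref{P10}(2). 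With infinitely many exposed points in hand, the assignment $v \mapsto \gamma_v$ built above produces the desired infinite family of geometrically distinct homologically maximizing timelike geodesics with uniformly convergent rotation vectors.
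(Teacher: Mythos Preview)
The paper states the corollary without proof, so there is nothing to compare against; your argument is the intended one, namely to feed exposing functionals into theorem \ref{T20} and read off the uniform convergence of rotation vectors from part (2).

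One genuine slip: your verification that the exposing functional $\alpha$ lies in $(\T^\ast)^\circ$ fails on lightlike $u$. There $|u|_1=0$ and $\mathfrak{l}(u)=0$, so $u/\mathfrak{l}(u)$ is undefined and the chain $\alpha(u)\ge \mathfrak{l}(u)\ge \inf f\cdot|u|_1>0$ breaks. The conclusion is nonetheless correct: your argument does give $\alpha\ge\mathfrak{l}$ on all timelike vectors, and if $\alpha(\ell)=0$ for some lightlike $\ell$ then $\alpha(v+t\ell)=1$ while $\mathfrak{l}(v+t\ell)\ge \inf f\,|v+t\ell|_1\to\infty$, contradicting $\alpha\ge\mathfrak{l}$ on the timelike ray $v+\R_{\ge 0}\ell$.

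The appeal to \cite{ba} for infinitely many exposed points also needs a word of adaptation, since Bangert's stable-norm sphere is compact whereas $\mathfrak{l}^{-1}(1)$ is not. The pinching you mention is exactly what is needed: if $\mathfrak{l}^{-1}([1,\infty))$ had only finitely many extreme points it would equal $P+\overline{\T}$ for a polytope $P$, and then for any $\alpha\in\partial\T^\ast\setminus\{0\}$ (vanishing on some lightlike $\ell$) the supporting face contains a ray $q+\R_{\ge 0}\ell\subset\mathfrak{l}^{-1}(1)$, forcing $\mathfrak{l}(q+t\ell)=1$ while $\inf f\,|q+t\ell|_1\to\infty$. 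Making this explicit closes the argument.
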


\begin{lemma}\label{L20}
Let $\alpha\in(\mathfrak{T}^\ast)^\circ$ and $\tau\colon V\to \R$ an $\alpha$-equivariant time function. 

(i) Then there exists an $\varepsilon=\varepsilon(\alpha)>0$, such that 
$$y-x\in \mathfrak{T}_\varepsilon$$
for all $x,y\in V$ with $y-x\in \mathfrak{T}$, $\tau(y)-\tau(x)\ge 2$ and $2d(x,y)\ge 
\mathfrak{h}_\tau(\tau(y)-\tau(x))$.

(ii) There exists a constant $I=I(\alpha)<\infty$ such that
$$\mathfrak{h}_\tau(\sigma_1)+\mathfrak{h}_\tau(\sigma_2)-I\le \mathfrak{h}_\tau(\sigma_1+\sigma_2)\le 
\mathfrak{h}_\tau(\sigma_1)+\mathfrak{h}_\tau(\sigma_2)$$
for all $\sigma_{1},\sigma_{2}\ge 0$.

(iii) We have 
$$\lim_{\sigma\to \infty}\frac{\sigma}{\mathfrak{h}_\tau(\sigma)}=\mathfrak{l}^\ast(\alpha).$$
\end{lemma}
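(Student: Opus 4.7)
Plan: I would handle the three parts in the order (ii), (iii)-$\liminf$, (i), (iii)-$\limsup$, since (ii) yields the basic Lipschitz/monotonicity control of $\mathfrak{h}_\tau$; the $\liminf$ half of (iii) provides the linear lower bound $\mathfrak{h}_\tau(\sigma)\ge c\sigma$ that is the key ingredient in (i); and (i) in turn is precisely what allows Proposition~\ref{P10}(1) to convert an almost-optimal curve into the matching upper estimate for $\mathfrak{h}_\tau$. A preliminary observation used throughout: viewing $\alpha$ as a linear functional on $V$, the function $\tau-\alpha$ is $\Gamma$-invariant and continuous, hence bounded, so $|\tau(y)-\tau(x)-\alpha(y-x)|\le C_\tau$ for some $C_\tau=C_\tau(\tau)<\infty$.

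For (ii), the upper inequality is obtained by splitting any future-pointing curve $\gamma$ with $\int_\gamma\omega_\tau=\sigma_1+\sigma_2$ at a parameter where the accumulated integral equals $\sigma_1$; such a parameter exists because $\omega_\tau$ is non-negative on future-pointing vectors ($\tau$ is temporal). For the lower bound, I would take almost-optimal $\gamma_i$ with $\int\omega_\tau(\gamma_i)=\sigma_i$, concatenate them in $V/\Gamma$ through a future-pointing bridge of length $\le\fil$ (via the last lemma of Section~\ref{S2}), and then correct the total $\tau$-integral, which differs from $\sigma_1+\sigma_2$ by at most a bounded quantity $|\Delta|\le B$. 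Monotonicity of $\mathfrak{h}_\tau$ together with the subadditive upper bound $\mathfrak{h}_\tau(\Delta)\le\mathfrak{h}_\tau(B)<\infty$ then absorbs $\Delta$ into the constant $I$.

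For the $\liminf$ half of (iii), I would fix $v_0\in\T$ with $\mathfrak{l}(v_0)=1$ and $\alpha(v_0)=\mathfrak{l}^\ast(\alpha)$. Since $\alpha\in(\T^\ast)^\circ$ is strictly positive on $\T\setminus\{0\}$, we have $v_0\in\T_{\e_0}$ for some $\e_0=\e_0(\alpha)>0$. By Proposition~\ref{P10}(1) the maximizing geodesic from $x$ to $x+nv_0$ has length $\ge n-K(\e_0)$, while its $\tau$-change is $n\mathfrak{l}^\ast(\alpha)+O(1)$, yielding $\liminf_{\sigma\to\infty}\mathfrak{h}_\tau(\sigma)/\sigma\ge 1/\mathfrak{l}^\ast(\alpha)$ and, in particular, $\mathfrak{h}_\tau(\sigma)\ge c\sigma$ for $\sigma\ge\sigma_0(\alpha)$. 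For (i) the hypothesis $2d(x,y)\ge\mathfrak{h}_\tau(\sigma)\ge c\sigma$ combined with $d(x,y)\le\sup f\,|y-x|_1$ and the quadratic estimate $|v|_1^2\le C\|v\|\dist(v,\partial\T)$ from \eqref{E1} gives
\[
\dist(y-x,\partial\T)\ge \frac{c^2\sigma^2}{4C(\sup f)^2\|y-x\|}.
\]
Strict positivity of $\alpha$ on $\T$ provides $\alpha(v)\ge c_\alpha\|v\|$, so $\|y-x\|\le(\sigma+C_\tau)/c_\alpha$; substitution produces a uniform ratio $\dist(y-x,\partial\T)/\|y-x\|\ge\e(\alpha)>0$. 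The remaining finite range $\sigma\in[2,\sigma_0]$ is absorbed into the constant after shrinking $\e$ if needed. For the $\limsup$ half of (iii), take an almost-optimal $\gamma$ with $\int\omega_\tau=\sigma$ and $L^g(\gamma)\ge\mathfrak{h}_\tau(\sigma)-1$; part (i) forces the lift endpoints to differ by a vector in $\T_\e$, so Proposition~\ref{P10}(1) together with $\mathfrak{l}(v)\le\alpha(v)/\mathfrak{l}^\ast(\alpha)$ yields $L^g(\gamma)\le\sigma/\mathfrak{l}^\ast(\alpha)+O(1)$, completing (iii).

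The hard part will be (i). The hypothesis $2d(x,y)\ge\mathfrak{h}_\tau(\sigma)$ is an absolute, scale-noninvariant bound, whereas the conclusion $y-x\in\T_\e$ is scale invariant. The decisive maneuver is the anti-Cauchy--Schwarz inequality \eqref{E1}, which converts a quadratic lower bound on $d(x,y)$ into a linear lower bound on $\dist(y-x,\partial\T)/\|y-x\|$ once $\|y-x\|$ has been controlled above by $\sigma$ via the strict positivity of $\alpha$ on $\T$; arranging all constants to depend only on $\alpha$, and handling the finite residual range of $\sigma$, is the main delicacy.
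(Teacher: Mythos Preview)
Your proposal is correct. For parts (ii) and (iii) it follows essentially the same lines as the paper, but your treatment of (i) and the overall logical ordering are genuinely different, and in fact more explicit.

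The paper proves (i) first, in one line (``clear from the fact that $\tau$ is an $\alpha$-equivariant time function and Theorem~\ref{T2}''), and then invokes (i) inside the proof of the lower bound in (ii) to control the truncation $\|\gamma_2(b_2)-\gamma_2(b'_2)\|$. Your route reverses these dependencies: you prove (ii) without appealing to (i), by using the already-established subadditive upper bound to absorb the bridge contribution $\mathfrak{h}_\tau(\Delta)\le\mathfrak{h}_\tau(B)<\infty$ directly into the constant $I$. This is cleaner, since the finiteness $\mathfrak{h}_\tau(B)<\infty$ follows immediately from $\tau$ being temporal (so that $\|\cdot\|$-arclength, hence $g$-length, is controlled by $\tau$-change).

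For (i), rather than invoking Theorem~\ref{T2} you feed the linear lower bound $\mathfrak{h}_\tau(\sigma)\ge c\sigma$ (from the easy half of (iii)) and the strict positivity $\alpha(v)\ge c_\alpha\|v\|$ on $\T$ into the anti-Cauchy--Schwarz estimate \eqref{E1}, obtaining an explicit lower bound for $\dist(y-x,\partial\T)/\|y-x\|$. This is a direct quantitative argument, whereas the paper's appeal to Theorem~\ref{T2} is elliptical: Theorem~\ref{T2} requires a priori that $\dot\gamma(t_0)\in\T_\e$ for some \emph{uniform} $\e$, which is not given by the hypotheses of (i), so some additional reasoning (essentially the one you supply) is implicitly needed there anyway. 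Your argument therefore fills in what the paper leaves unsaid.
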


\begin{proof}
(i) Clear from the fact that $\tau$ is an $\alpha$-equvariant time function and theorem \ref{T2}.

(ii) The right side of the inequality is clear from the observation that any curve $\gamma\colon [a,b]\to V/\Gamma$ with 
$\int_{\gamma} \omega_\tau=\sigma_1 +\sigma_2$ is the concatenation of two curves $\gamma_{i}\colon [a_{i},b_{i}]\to 
V/\Gamma$ $(i=1,2)$ with $\int_{\gamma_i} \omega_\tau=\sigma_{i}$.

For the other inequality we reverse this procedure. Let $\gamma_{i}\colon [a_{i},b_{i}]\to V/\Gamma$ be future pointing 
curves with $\int_{\gamma_i} \omega_\tau =\sigma_i$ $(i=1,2)$. Choose a future pointing curve $\zeta$ from $\gamma_1(b_1)$
to $\gamma_2(a_2)$ of $\|.\|$-arclength bounded by $\fil$. Then for sufficiently large $\sigma_2$ (the other cases can 
be absorbed into the constant $I$) there exists a $b'_2\in [a_2,b_2]$ with $\int_{\gamma_1\ast \zeta\ast 
\gamma_2|_{[a_2,b_2']}} \omega_\tau=\sigma_1+\sigma_2$. Using (i), the distance $\|\gamma_2(b_2)-\gamma_2(b'_2)\|$ 
is uniformly bounded by some $I'(\alpha)<\infty$. The claim now follows by noting that $d(x,x+v)\le \sup f |v|_1$. 

(iii) Choose $v_\alpha \in \mathfrak{l}^{-1}(1)$ with $\alpha(v_\alpha)=1$. Then for all $x\in V$
$$\mathfrak{l}^\ast(\alpha)=\lim_{n\to\infty}\frac{n}{d(x,x+nv_\alpha)}\mathfrak{l}^\ast(\alpha)\ge \limsup_{n\to\infty}
\frac{n\mathfrak{l}^\ast(\alpha)}{\mathfrak{h}_\tau(\tau(x+nv_\alpha)-\tau(x))}.$$
The difference $\tau(x+nv_\alpha)-\tau(x)-n\alpha(v_\alpha)$ is uniformly bounded. 
The continuity of $\mathfrak{h}_\tau$ and part (ii) then imply
$$\limsup_{\sigma\to\infty}\frac{\sigma}{\mathfrak{h}_\tau(\sigma)}\le \mathfrak{l}^\ast(\alpha).$$
It remains to prove the opposite inequality
$$\liminf_{\sigma\to\infty}\frac{\sigma}{\mathfrak{h}_\tau(\sigma)}\ge \mathfrak{l}^\ast(\alpha).$$
By proposition \ref{P10}, there exists a $K=K(\alpha)<\infty$ such that $\mathfrak{l}(\sigma v')\ge 
d(x,x+\sigma v')-K$ for all $\sigma\ge 0$ and all $v'\in \mathfrak{l}^{-1}(1)\cap \alpha^{-1}(1)$. But then we have
$$\sigma \ge \mathfrak{l}^\ast(\alpha)\mathfrak{l}(\sigma v')\ge \mathfrak{l}^\ast(\alpha)
\mathfrak{h}_\tau(\tau(x+\sigma v')-\tau(x)).$$
The claim now follows by noting again that the difference $\tau(x+nv')-\tau(x)-n\alpha(v')$ is uniformly bounded. 
\end{proof}

\begin{proof}[Proof of theorem \ref{T20}]
(i) Let $\tau\colon V\to \R$ be an $\alpha$-equivariant temporal function. Consider a sequence of homologically 
maximizing timelike pregeodesics $\gamma_n\colon [-T_n,T_n]\to V/\Gamma$ parameterized by $\|.\|$-arclength such that 
$\int_{\gamma_n}\omega_\tau=2n$ and $L^g(\gamma_n)=\mathfrak{h}_\tau(2n)$. The sequence $T_n$ is obviously 
unbounded. The limit curve lemma implies that $\{\gamma_n\}$ contains a converging subsequence. 
Denote the limit curve by $\gamma\colon \R\to V/\Gamma$. 
The $\alpha$-maximality of the $\gamma_n$ imply that the $\gamma_n$ are uniformly timelike. $\gamma$ is therefore 
timelike as well. The homological maximality of $\gamma$ follows from the homological maximality of the $\gamma_n$.
$\gamma$ is an $\alpha$-almost maximal curve. Note that by lemma \ref{L20} (ii) for any interval $[a,b]\subset 
[-T_n,T_n]$ we have
\begin{align*}
L^g(\gamma_n)&=L^g(\gamma_n|_{[-T_n,a]})+L^g(\gamma_n|_{[a,b]})+L^g(\gamma_n|_{[b,T_n]})= \mathfrak{h}_\tau(2n)\\
&\ge \mathfrak{h}_\tau\left(\int_{\gamma_n|_{[-T_n,a]}}\omega_\tau\right)
+\mathfrak{h}_\tau\left(\int_{\gamma_n|_{[a,b]}}\omega_\tau\right)
+\mathfrak{h}_\tau\left(\int_{\gamma_n|_{[b,T_n]}}\omega_\tau\right)-2F.
\end{align*}
This implies already $L^g(\gamma|_{[a,b]})\ge \mathfrak{h}_\tau\left(\int_{\gamma_n|_{[a,b]}}\omega_\tau\right) -2F$. 

(ii) The first step is to note that for all $\alpha$-almost maximal curves $\gamma$ and all $\alpha$-equivariant 
temporal functions $\tau$, the ratio 
\begin{align}\label{E20}
\frac{\int_{\gamma|_{[s,t]}}\omega_\tau}{L^g(\gamma|_{[s,t]})}\to 1
\end{align}
for $\|\gamma(t)-\gamma(s)\|\to \infty$. Lemma \ref{L20} (iii) implies $\lim_{\sigma\to \infty}
\frac{\sigma}{\mathfrak{h}_\tau(\sigma)}=1$. Since $\gamma$ is $\alpha$-almost maximal, we conclude
$$\lim_{\|\gamma(t)-\gamma(s)\|\to \infty}
\frac{\mathfrak{h}_\tau(\int_{\gamma|_{[s,t]}}\omega_\tau)}{L^g(\gamma|_{[s,t]})}=1.$$
This implies (\ref{E20}).

By lemma \ref{L20} (i), there exists an $\e(\alpha)>0$ such that for $\|\gamma(t)-\gamma(s)\|$ sufficiently large,
$\gamma(t)-\gamma(s)\in \T_{\e(\alpha)}$. But then 
$$L^g(\gamma|_{[s,t]})-K(\e)\le \mathfrak{l}(\gamma(t)-\gamma(s))\le \alpha(\gamma(t)-\gamma(s)).$$
Now let $U$ be a neighborhood of $\alpha^{-1}(1)\cap \mathfrak{l}^{-1}(1)$ and $\delta=\delta(U)>0$ such that for all 
$h\in \mathfrak{l}^{-1}(1)\setminus U$, $\alpha(h)\ge 1+\delta$. If $\rho(\gamma(t)-\gamma(s))\notin U$, we get
$$\alpha(\gamma(t)-\gamma(s))\ge (1+\delta)\mathfrak{l}(\gamma(t)-\gamma(s))\ge (1+\delta)(L^g(\gamma|_{[s,t]})-K(\e)).$$
If $\gamma(t)-\gamma(s)$ is not bounded from above, this contradicts the above conclusion:
$\frac{\int_{\gamma|_{[s,t]}}\omega_\tau}{L^g(\gamma|_{[s,t]})}\to 1$
\end{proof}

\bibliographystyle{amsalpha}

\end{document}